\documentclass[a4paper,10pt,leqno]{amsart}
  \textwidth=6,3in
\usepackage[total={6in,9in},
top=1in, left=1in, right=1in, bottom=1in]{geometry}
\usepackage{amsmath}
\usepackage{amsfonts}
\usepackage{amssymb}
\usepackage[english]{babel}
\usepackage{dsfont}
\usepackage{mathrsfs}
\usepackage{graphicx}
\usepackage{setspace}
\usepackage{fancyhdr}
\usepackage{amsthm}
\usepackage{empheq}
\usepackage{cases}
\usepackage[all]{xy}
\usepackage{stmaryrd}
\usepackage{color}
\usepackage{tocvsec2}

\newcommand{\erre}{\mathbb{R}}
\newcommand{\erren}{\mathbb{R}^n}

\newcommand{\ricc}{\operatorname{Ric}}

\newcommand{\Hess}{\operatorname{Hess}}

\newcommand{\sff}{\mathrm{II}}

%\newcommand{\Longlra}{\Longleftrightarrow}

               % tra parentesi graffe
\newcommand{\pa}[1]{{\left(#1\right)}}                  % tra tonde
                  % tra quadre
\newcommand{\abs}[1]{{\left|#1\right|}}                 % valore assoluto
              % norma
      % pairing

                           % epsilon
                                  % tale che
  % sistemi di equazioni

\newcommand{\metric}{g}                          % metrica
                 % varietÃÆÃâÃâ  riemanniana
                      % varietÃÆÃâÃâ  riemanniana con g

\renewcommand{\tilde}[1]{\widetilde{#1}}

\newcommand{\tx}[1]{\mbox{\;{#1}\;}}
\newcommand{\p}{\partial}

\newtheorem{theorem}{\textbf{Theorem}}[section]
\newtheorem{lemma}[theorem]{\textbf{Lemma}}
\newtheorem{proposition}[theorem]{\textbf{Proposition}}
\newtheorem{cor}[theorem]{\textbf{Corollary}}

\newtheorem{rem}[theorem]{\textbf{Remark}}

\newtheorem{exe}[theorem]{\textbf{Example}}
\numberwithin{equation}{section}
\pagestyle{plain}
\onehalfspacing

\title{Reaction-diffusion problems \\ on time-dependent Riemannian manifolds:\\  stability of periodic solutions}

\date{\today}

\keywords{Reaction-diffusion equations; stability; instability; Riemannian manifolds; Ricci curvature.}

\subjclass[2010]{35B35, 35B36, 35J61, 35K58, 35P99, 58J05, 58J32, 58J35}

\begin{document}
\maketitle

\begin{center}
\textsc{\textmd{C. Bandle\footnote{University of Basel, Switzerland. Email: c.bandle@gmx.ch.}, D. D. Monticelli\footnote{Politecnico di Milano, Italy. Email:
dario.monticelli@polimi.it.} and F. Punzo\footnote{Politecnico di Milano, Italy. Email: fabio.punzo@polimi.it. \\  D.D.
Monticelli and F. Punzo are supported by GNAMPA projects 2017 of INdAM. D. D. Monticelli is partially supported  by the PRIN-2015KB9WPT Grant:
``Variational methods, with applications to problems in mathematical physics and geometry''}}}
\end{center}

\begin{abstract}
We investigate the stability of time-periodic solutions of semilinear parabolic problems with Neumann boundary conditions. Such problems are posed on compact submanifolds evolving periodically in time. The discussion is based on the principal eigenvalue of periodic parabolic operators. The study is motivated by biological models on the effect of growth and curvature on patterns formation. The Ricci curvature plays an important role.
\end{abstract}

\maketitle

%\tableofcontents

\section{Introduction}
Stable stationary nonconstant solutions of reaction-diffusion equations play an important role in the study of patterns,  which are of great interest in mathematical biology. Moreover, problems where the data depend periodically on time arise naturally in population ecology. An example is the  {\sl T-periodic Fisher  model}
\begin{equation*}
\begin{cases}
\frac{\p u}{\p t} -\Delta u= m(x,t)h(u) &\tx{in } \Omega\times (0,\infty),\\
\frac{\p u}{\p \nu} =0 &\tx{on } \p \Omega\times (0,\infty).
\end{cases}
\end{equation*}
Here $\Omega \subset \erren$ is a bounded Lipschitz domain, $\nu$ is its outer normal, $m(x,t)=m(x,t+T)$ may change sign and $h(u)= u(1-u)[\alpha (1-u)+(1-\alpha)u]$ for some $0<\alpha <1$. It has been shown that this problem possesses $T$-periodic solutions, see \cite{Hess, HW}.
As in the case of stationary solutions, the question of their stability arises. It is related to the principal eigenvalue of the corresponding linearized problem. The principal eigenvalue corresponding to a time-periodic eigenfunction has been studied in detail by Hess and coworkers. Most results are collected in the Lecture Note \cite{Hess}. For nonlinear source terms $f(t,u,\nabla u)$ which do not depend on $x$, P. Hess
\cite{He} has shown that in a convex domain $\Omega\subset\erre^n$ all $T$-periodic solutions are unstable. Our main goal is to extend the results of Hess to problems on Riemannian manifolds.

In this paper we consider reaction-diffusion equations on submanifolds  in a fixed Riemannian manifold, evolving periodically in time. In this case the metrics of the submanifolds  depend on time. The motivation comes from biology, where models have been developed for substances which occupy domains depending on time. We refer in particular to the paper by Maini et al. \cite{EFG}.

Similar problems where the equation, the metric and the domain are time independent  have been considered in \cite{Ji} and \cite{BPT}.
In all these papers the Ricci curvature and the convexity of the underlying domain is crucial for the stability of solutions.

Several authors have studied the heat equation on manifolds with time-dependent metrics, see for instance  \cite{Chow} and \cite{Gue}
in connection with the Ricci flow. Moreover in \cite{EFG} semilinear parabolic equations on evolving surfaces of $\mathbb R^3$ have been considered, in particular the Turing instability has been discussed.

In our paper the most illuminating examples are evolving surfaces of revolution in $\mathbb{R}^3$, including dilations of spheres and cones. We give criteria for the instability of time periodic solutions, depending on various curvatures. In the general case the Ricci curvature and the convexity of the underlying submanifolds come into play.

Our paper is organized as follows. In Section 2, for the reader's convenience, we collect some general results on time-periodic parabolic problems and a criterion for stability and instability. In Section 3 we treat problems on surfaces with rotational symmetry. Section 4 is devoted to general Riemannian manifolds. For the reader's convenience we give a short introduction to the main geometric concepts and tools, such as the Ricci curvature and the Bochner-Weitzenb\"ock formula which will be needed later in the proof of instability. Moreover we also relate our general problem to a biological model by Maini et al. \cite{EFG}.

%%%%%%%%%%%%%%%%%%%%%%%%%%%%%%%%%%%%%%%%%%%%
%%%%%%%%%%%%%%%%%%%%%%%%%%%%%%%%%%%%%%%%%%%%
\section{Known results for T-periodic solutions}
In this section we collect some results on periodic solutions of parabolic problems. Most of the material is taken from \cite{Hess} and the references cited therein.

Let $Q_T\subset \erre ^{n+1}$ be a bounded Lipschitz domain of the form
$$Q_T=\bigcup_{0< t\leq T} \Omega_t \times \{t\},$$ where $\Omega_t \subset \mathbb{R}^m$ for each $t\in[0,T]$. We require that $\Omega_t= \Omega_{t+T}$.  With $(x,t)$ where $x\in \Omega_t$ and $0\leq t\leq T$ we denote a point in $Q_T$.
 Let  $\nu_t=(\nu_1(t),\cdots,\nu_n(t))$ be the outer normal of $\Omega_t$.  Denote by $\Gamma_T$ the parabolic boundary $$\Gamma_T:=  \bigcup_{0\leq t<T}\p \Omega_t \times \{t\}.$$

 In $Q_T$ we consider the elliptic operator
 $$
 A(x,t)=- \sum_{i,j=1}^n a_{ij}(x,t) \frac{\p^2}{\p x_i\p x_j} +\sum_{i=1}^n a_i(x,t) \frac{\p}{\p x_i} +a_0(x,t),
 $$
where  all coefficients are supposed to be H\"older continuous with respect to the parabolic metric  $$\left(|x-y|^2+ |s-t|\right)^{1/2}.$$ More precisely $a_{ij},a_i,a_0 \in C^{\mu,\mu/2}(\overline{Q}_T)$. The first exponent refers to the $x$- and the second to the $t$-derivatives. In addition we assume that they are $T$-periodic, {\sl  i.e.} $a_{ij}(x,t)=a_{ij}(x,t+T)$, $a_i(x,t)=a_i(x,t+T)$ for $i,j=0,\cdots,n$.
%%%%%%%%%%%%%%%%%%%%%%%%%
 \subsection{The eigenvalue problem}The eigenvalue problem which will be crucial  for our arguments is
\begin{equation}\label{eq:eigenvalue}
\begin{cases}
\displaystyle\frac{\p \phi}{\p t} +A(x,t) \phi=\lambda \phi& \tx{in } Q_T,\\
 \displaystyle\sum_{i,j=1}^n a_{ij}(x,t)\frac{\p \phi}{\p x_i} \nu_j(t) =0& \tx{on } \Gamma_T,\\
\phi(x,t+T)=\phi(x,t).
\end{cases}
\end{equation}

The following result has been proved in \cite{BH} and \cite{Hess} for the case of a cylindrical domain $Q_T=\Omega_0\times(0,T)$.  If we assume that $\Omega_t$ is isomorphic to $\Omega_0$, we can make a change of coordinates such that the transformed equation is defined in a cylindrical domain.

%%%%%%%%%%
\begin{lemma}\label{l:eigenvalue}
Assume that $\sum_{i=1}^n a_{ij}(x,t) \nu_j(t)$ is not tangential to $\Omega_t$. Then
\begin{enumerate}
\item[i)] $\lambda$ does not depend on $t$.
\item[ii)]  The spectrum of \eqref{eq:eigenvalue} is discrete.
\item[iii)] There exists a principal eigenvalue $\lambda_1$ which is real, simple and has an eigenfunction of constant sign.
\item[iv)] Re$\{\lambda\} >\lambda_1$ for all other eigenvalues $\lambda$.
\item[v)] If $a_0\geq 0$, $a_0\neq 0$, then $\lambda_1 >0$.
\end{enumerate}
\end{lemma}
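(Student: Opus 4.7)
The plan follows the strategy already outlined in the preamble: reduce to the cylindrical case of \cite{BH,Hess} by a time-dependent change of spatial coordinates that straightens the evolving domain. Using the hypothesis that $\Omega_t$ is diffeomorphic to $\Omega_0$ together with $T$-periodicity, I would first build a smooth family of diffeomorphisms $\Psi_t \tc \Omega_0 \to \Omega_t$ with $\Psi_{t+T}=\Psi_t$ and $\Psi_t(\p \Omega_0) = \p \Omega_t$. Setting $y = \inv{\Psi_t}(x)$ turns $Q_T$ into the cylinder $\Omega_0 \times (0,T)$, and a function $\phi$ on $Q_T$ pulls back to $\tilde{\phi}(y,t) = \phi(\Psi_t(y),t)$ on this cylinder.

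Next, I would write out the transformed eigenvalue problem explicitly. The chain rule yields $\p_t \phi = \p_t \tilde\phi - V(y,t)\cdot \nabla_y \tilde\phi$ with $V = (D\Psi_t)^{-1}\p_t \Psi_t$ smooth and $T$-periodic, while the spatial derivatives are recombined by the Jacobian of $\Psi_t$. Hence the pulled-back operator is again a uniformly elliptic second-order operator with H\"older-continuous, $T$-periodic coefficients; only the first-order terms are genuinely altered, while the zero-order coefficient transforms to $a_0(\Psi_t(y),t)$, preserving sign (and hence the hypothesis of v)). The conormal boundary condition becomes an oblique boundary condition on $\p \Omega_0$; non-tangentiality is inherited since $\Psi_t$ is a diffeomorphism sending $\p \Omega_0$ to $\p \Omega_t$ and by the assumption on $\sum_j a_{ij}\nu_j$. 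With these verifications the cylindrical-domain version of the statement in \cite{BH,Hess} applies to $\tilde \phi$ and yields i)--v); since $\Psi_t$ is a bijection preserving sign, $T$-periodicity and real-valuedness, the same conclusions transfer back to $\phi$.

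The main obstacle is purely technical: one must choose $\Psi_t$ with enough joint regularity in $(y,t)$ for the pulled-back coefficients to land in the class $C^{\mu,\mu/2}(\ol{Q}_T)$ required by the theory of \cite{Hess}, and check that obliqueness of the transformed boundary condition does not degenerate at any $t\in[0,T]$. Beyond this bookkeeping no genuinely new spectral analysis is needed: each of i)--v) is a direct consequence of the cylindrical-domain theorem transported by $\Psi_t$.
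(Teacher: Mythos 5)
Your proposal is correct and follows exactly the route the paper itself takes: the paper does not give a detailed proof but simply invokes the cylindrical-domain results of \cite{BH} and \cite{Hess} after noting that, since $\Omega_t$ is isomorphic to $\Omega_0$, a change of coordinates reduces the problem to a cylinder. Your write-up merely makes explicit the bookkeeping (the induced drift term, preservation of the sign of $a_0$, and obliqueness of the transformed boundary condition) that the paper leaves implicit.
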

%%%%%%%%%%%%%%%%%
In contrast to the elliptic case, $\lambda_1$ has no variational characterization. Estimates can be found in \cite{Hess}. Our arguments will be based on the following lemma which was also used in \cite{DH, He}. We provide here a slightly different proof.
%%%%%%%%%%%%%%%%%%%%%LLLLLLLLLL
\begin{lemma}\label{l:estimates}
 Assume that there exists a positive, T-periodic function $w\in L^2(0,T;W^{1,2}(\Omega_t))\cap C^1(\overline{Q_T})$, $w\geq 0$, $w\not\equiv 0$ such that
 \begin{equation*}
 \begin{cases}
 \displaystyle   \frac{\p w}{\p t}+A(x,t)w\leq 0& \tx{in } Q_T \tx{in the weak sense},\\
 \displaystyle\sum_{i,j=1}^n a_{ij}(x,t)\frac{\p w}{\p x_i} \nu_j (t)\leq 0&  \tx{on } \Gamma_T.
 \end{cases}
 \end{equation*}
Then $\lambda_1\leq 0$.

Similarly if $w$ satisfies
 \begin{equation*}
 \begin{cases}
 \displaystyle   \frac{\p w}{\p t}+A(x,t)w\geq 0& \tx{in } Q_T \tx{in the weak sense},\\
 \displaystyle\sum_{i,j=1}^n a_{ij}(x,t)\frac{\p w}{\p x_i} \nu_j (t)\geq 0&  \tx{on } \Gamma_T.
 \end{cases}
 \end{equation*}
Then $\lambda_1\geq 0$.
\end{lemma}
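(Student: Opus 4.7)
The plan is to test the weak inequality for $w$ against a positive eigenfunction of the \emph{adjoint} periodic parabolic problem and to exploit positivity.

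I would first reduce to a cylindrical domain. Since $\Omega_t$ is isomorphic to $\Omega_0$, the $T$-periodic change of coordinates mentioned right before Lemma~\ref{l:eigenvalue} maps $Q_T$ onto $\Omega_0\times(0,T)$ and preserves the form of the equation, the conormal condition, and the regularity and $T$-periodicity of the coefficients. I may thus assume $Q_T=\Omega_0\times(0,T)$. Applying Lemma~\ref{l:eigenvalue} to the time-reversed formal adjoint $-\p_t+A^*$ then produces a strictly positive $T$-periodic principal eigenfunction $\phi^*$, with the same principal eigenvalue $\lambda_1$, satisfying
\begin{equation*}
-\frac{\p\phi^*}{\p t}+A^*(x,t)\phi^*=\lambda_1\phi^*\tx{in} Q_T,
\end{equation*}
subject to the conormal condition dual to that of the direct problem and to $\phi^*(x,t+T)=\phi^*(x,t)$.

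Next I would multiply the inequality $\p_t w+Aw\leq 0$ by $\phi^*$ and integrate over $Q_T$. The temporal boundary terms at $t=0,T$ cancel by $T$-periodicity of $w$ and $\phi^*$, and the adjoint boundary condition is chosen precisely so that Green's identity leaves only one spatial boundary contribution, namely
\begin{equation*}
\int_0^T\!\!\int_{\p\Omega_0}\phi^*\sum_{i,j=1}^n a_{ij}(x,t)\frac{\p w}{\p x_i}\nu_j(t)\,d\sigma\,dt\leq 0,
\end{equation*}
non-positive by hypothesis and by $\phi^*>0$. Using the adjoint eigenvalue equation, the resulting identity collapses to
\begin{equation*}
\lambda_1\int_0^T\!\!\int_{\Omega_0} w\,\phi^*\,dx\,dt\leq 0,
\end{equation*}
and since $w\geq 0$, $w\not\equiv 0$ and $\phi^*>0$ make the integral strictly positive, I conclude $\lambda_1\leq 0$. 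The second assertion follows by reversing all inequalities in the same argument.

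The main obstacle is the careful bookkeeping of the boundary conditions: in the cylindrical reduction the original conormal operator acquires lower-order corrections from the $t$-derivative of the diffeomorphism, and the adjoint boundary condition for $\phi^*$ must be defined precisely so as to absorb these corrections, leaving the clean spatial boundary term displayed above. Once this identification is correctly in place, the rest is a parabolic analogue of the classical elliptic duality argument relating the sign of the principal eigenvalue to the existence of a positive super- or sub-solution.
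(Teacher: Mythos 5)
Your route is genuinely different from the paper's. The paper does not touch the adjoint problem at all: it writes $w=v\phi$ with $\phi>0$ the principal eigenfunction of the \emph{direct} problem \eqref{eq:eigenvalue}, derives a parabolic differential inequality for $v$ whose zeroth-order coefficient is $\lambda_1$, and then, assuming $\lambda_1>0$, uses the maximum principle for weakly subparabolic functions together with the conormal sign condition, the Hopf boundary point argument and $T$-periodicity to force $v$ to be a positive constant, which contradicts the inequality. Your duality argument --- testing $\p_t w+Aw\le 0$ against a positive principal eigenfunction $\phi^*$ of the time-reversed adjoint and reading off $\lambda_1\iint_{Q_T} w\,\phi^*\le 0$ --- is the classical alternative, and the sign bookkeeping you describe (periodicity killing the temporal terms, the adjoint boundary condition killing one spatial term, the hypothesis killing the other) is correct.

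There is, however, a real gap at the step you pass over most quickly. You cannot obtain $\phi^*$ by ``applying Lemma \ref{l:eigenvalue} to $-\p_t+A^*$'': the formal adjoint $A^*v=-\sum_{i,j}\p_{x_i}\p_{x_j}(a_{ij}v)-\sum_i\p_{x_i}(a_iv)+a_0v$ requires differentiating the coefficients, which are only assumed H\"older continuous, so $A^*$ is not an operator of the class to which that lemma applies; and even for smooth coefficients Lemma \ref{l:eigenvalue} by itself does not tell you that the adjoint problem has the \emph{same} principal eigenvalue $\lambda_1$ --- that identification requires the separate adjoint periodic-parabolic theory (in Hess's notes it comes from the adjoint of the period/evolution operator). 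The double integration by parts that produces $\iint w\,A^*\phi^*$ plus a single clean boundary term likewise needs differentiable $a_{ij}$. Both points are repairable, either by citing the adjoint theory directly or by strengthening the regularity of the coefficients (which holds in all the geometric applications of this paper), but as written the proof does not close under the stated H\"older hypotheses; the paper's substitution $w=v\phi$ avoids the adjoint entirely and needs only the direct principal eigenfunction and a maximum principle.
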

%%%%%%%%%%%%%%%%%%%%%%%%%%%llllllllllllllll
\begin{proof}
  Let $\phi>0$ be the eigenfunction corresponding to the principal eigenvalue of \eqref{eq:eigenvalue}. Since it is of constant sign we can write
$w=v\phi$. Then $v $ satisfies in the weak sense
\begin{align}
&\frac{\p v}{\p t} - \sum_{i,j=1}^n a_{ij}(x,t) \frac{\p^2 v}{\p x_i\p x_j} +\sum_{i=1}^n a_i(x,t) \frac{\p v}{\p x_i}  -2\phi^{-1}\sum_{i,j=1}^n a_{ij} \frac{ \p v}{\p x_i}\frac{ \p \phi}{\p x_j}+\lambda_1 v \leq 0\qquad  \tx{in } Q_T, \label{eq:subsolution1}\\
&\sum_{i,j=1}^n a_{ij}(x,t)\frac{\p v}{\p x_i} \nu_j (t)\leq 0 \qquad\tx{on } \Gamma_T, \label{eq:subsolution2}\\
&v(x,t)=v(x,t+T)\geq 0 \label{eq:subsolution3}.
\end{align}
Assume that $\lambda_1> 0$. By the maximum principle for weakly subparabolic functions \cite{Fr}, $v$ assumes its maximum on $\Gamma_T \cup \Omega_0$ unless $v\equiv v_{\max}$ in $Q_T$. This is impossible by \eqref{eq:subsolution1}.  Since $\sum_{i,j=1}^n a_{ij}(x,t)\frac{\p v}{\p x_i} \nu_j (t)\leq 0$ on $\p \Omega_t\times \{t\}$  the strong maximum principle implies that $v$ cannot take its maximum on $\Gamma_T$, for $0\leq t\leq T$. If it takes its maximum on $\Omega_0$, in view of the periodicity it assumes its maximum also on $\Omega_T\times \{T\}$ and it is therefore constant.
%which is an inner point of $Q_T$.  In this case $v\equiv v_{\max}$  in $Q_T$. By assumption $v$ is positive.
By \eqref{eq:subsolution1} this is impossible, and therefore $\lambda_1\leq 0$. The proof of the other assertion is similar.
\end{proof}

%%%%%%%%%%%%%%%%%%%%%%%%%%%%%%%%%%%%%%%%%%ssssssssss
\subsection{Semilinear parabolic problems}
Consider the quasilinear periodic-parabolic boundary value problem
\begin{equation}\label{eq:semilinear}
\begin{cases}
\displaystyle\frac{\p u}{\p t} +A(x,t)u= f(x,t,u,\nabla u) &\tx{in } Q_T,\\
 \displaystyle\sum_{i,j=1}^n a_{ij}(x,t)\frac{\p u}{\p x_i} \nu_j (t)=0& \tx{on } \Gamma_T,\\
u(\cdot,t)=u(\cdot, t+T).
\end{cases}
\end{equation}
Existence results for large classes of nonlinearities have been derived in \cite{ A, DH}.
We are interested in the \textit{linearized stability} of the solutions of \eqref{eq:semilinear}. We assume that $f$ is of class $C^1$ and $T-$periodic for $t\in \mathbb R$. The corresponding linearized problem is
\begin{equation}\label{eq:linearized}
\begin{cases}
\displaystyle\frac{\p \phi}{\p t} +A(x,t)\phi-f_u(x,t,u,\nabla u)\phi-\sum_{i=1}^n f_{u_{x_i}}(x,t,u,\nabla u) \phi_{ x_i}=\lambda_1 \phi \qquad\tx{in } Q_T,\\ \displaystyle\sum_{i,j=1}^n a_{ij}(x,t)\phi_{ x_i} \nu_j (t)=  0\qquad \tx{on } \Gamma_T,\\
\phi(x,t+T)=\phi(x,t)\qquad x\in\Omega_t.
\end{cases}
\end{equation}
A similar criterion as for the stationary solutions holds for the $T$-periodic solutions of \eqref{eq:semilinear}. It goes back to D. Henry  \cite{DaH}.
%%%%%%%%%%%%%%%%%%%%%%%%%%%%%%%%%%%%%%%%LLLLLLLLLLLLLLLLLLL
%%%%%%%%%%%%%%%%%%%%%%%%%%%%%%%%%%%%%%
\begin{lemma}\label{l:instability} (i) If $\lambda_1 >0$, then $u$ is stable.
\smallskip

(ii) If $\lambda_1<0$, then $u$ is unstable.
\end{lemma}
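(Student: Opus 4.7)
The plan is to reduce to the standard principle of linearized stability of Henry \cite{DaH} by working directly with the nonlinear equation \eqref{eq:semilinear} via a barrier argument built from the principal eigenfunction of \eqref{eq:linearized}.

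First I would set $v:=\tilde u-u$, where $u$ is the $T$-periodic solution under study and $\tilde u$ is any solution of \eqref{eq:semilinear} starting close to $u$ at $t=0$. A Taylor expansion of $f$ around $(u,\nabla u)$ shows that $v$ satisfies, classically,
\begin{equation*}
\frac{\p v}{\p t}+A(x,t)v-f_u(x,t,u,\nabla u)\,v-\sum_{i=1}^n f_{u_{x_i}}(x,t,u,\nabla u)\,v_{x_i}=N(x,t,v,\nabla v),
\end{equation*}
together with the conormal boundary condition, where $|N(x,t,v,\nabla v)|\leq C(|v|^2+|\nabla v|^2)$ uniformly on $\overline{Q_T}$ for $(v,\nabla v)$ small. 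Let $\phi>0$ be the principal eigenfunction of \eqref{eq:linearized} given by Lemma \ref{l:eigenvalue}; it is $T$-periodic, strictly positive, and satisfies the conormal boundary condition. The function $\psi(x,t):=e^{-\lambda_1 t}\phi(x,t)$ is a classical solution of the fully linearized equation (with right-hand side $0$) and the same boundary condition, and $\phi$ has positive infimum over $\overline{Q_T}$.

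For (i), assume $\lambda_1>0$. The barrier $K\psi$ is a supersolution of the \emph{linearized} problem for every $K>0$, and the key point is that the nonlinear remainder $N$ is superlinear. If $\|v(\cdot,0)\|_{C^1}$ is small enough, I would show by a continuity argument on the maximal interval $[0,T^*)$ on which $\|v(\cdot,t)\|_{C^1}\leq \delta$ that one can arrange, using parabolic $C^{1+\alpha}$ estimates to control $\nabla v$ by $v$, the pointwise bound $|v(x,t)|\leq K\psi(x,t)$. Invoking the maximum principle for the linearized operator after absorbing $N$ into the linear part (thanks to $\delta$ being small) then gives the bound on the whole interval, forcing $T^*=\infty$ and $|v(\cdot,t)|\leq K e^{-\lambda_1 t}\phi\to 0$; this is stability. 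For (ii), assume $\lambda_1<0$. Choose the perturbation $\tilde u(\cdot,0)=u(\cdot,0)+\eps\phi(\cdot,0)$ with $\eps>0$ small and study $v$ from below using $\tfrac{\eps}{2}\psi$ as a subsolution for the nonlinear problem, which is possible as long as $\|v(\cdot,t)\|_{C^1}$ stays small because the linear terms are destabilizing while $N$ is of higher order. Since $e^{-\lambda_1 t}\to\infty$, $v$ cannot remain small, contradicting stability.

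The main obstacle is the presence of $\nabla v$ in the nonlinear remainder $N$: a naive pointwise comparison in $v$ alone fails, so one must combine the barrier argument with Schauder-type parabolic regularity to dominate $|\nabla v|$ by a norm of $v$ itself, or equivalently work in a function space adapted to the sectorial structure of the linearized operator (as Henry does). Since the linearized operator is uniformly elliptic with smooth $T$-periodic coefficients, these tools are standard; in practice I would invoke the abstract theorem of \cite{DaH} directly, checking only that Lemma \ref{l:eigenvalue} provides the required spectral gap between $\lambda_1$ and the rest of the spectrum.
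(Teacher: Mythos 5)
The paper does not actually prove this lemma: it states that ``the proof is found in \cite[pages 247--250]{DaH}'', i.e.\ it invokes Henry's abstract principle of linearized stability for time-periodic solutions, which works with the period (Poincar\'e) map of the evolution operator in a fractional power space $X^\alpha$ and relates the spectrum of that map to the Floquet exponents $\lambda$ of \eqref{eq:linearized} via the multipliers $e^{-\lambda T}$. Your closing fallback --- ``invoke the abstract theorem of \cite{DaH} directly, checking only that Lemma \ref{l:eigenvalue} provides the required spectral gap'' --- is therefore exactly the paper's argument, and that part of your proposal is fine. The barrier construction that occupies most of your write-up is a genuinely different, more elementary route; it is worth comparing because it would, if completed, avoid the semigroup machinery, but as written it has two soft spots.

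First, the comparison principle you want to apply requires the barrier to be a super- (resp.\ sub-) solution of the \emph{nonlinear} equation satisfied by $v$, i.e.\ you need $\mathcal L(K\psi)\geq N(x,t,K\psi,\nabla(K\psi))$, whereas $\mathcal L(K\psi)=0$ and the sign of $N$ evaluated at the barrier is unknown; the same objection applies to $\tfrac{\eps}{2}\psi$ in part (ii). This is fixable (replace $\lambda_1$ by $\lambda_1-\delta$ in the exponent, or perturb $\phi$), but it is not automatic and you do not do it. Second, and more seriously, because $N$ depends on $\nabla v$ a pointwise comparison in $v$ alone cannot close: you acknowledge this and propose Schauder estimates to dominate $|\nabla v|$ by $\sup|v|$, but that estimate degrades near $t=0$ and must be run on overlapping time intervals with a continuation argument; at that level of technicality you have essentially reconstructed Henry's $X^\alpha$ framework. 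For part (ii) the cleaner elementary argument in this scalar, order-preserving setting (the one Hess uses in \cite{He,Hess}) is different from yours: one checks that $u+\eps\phi$ is a strict \emph{subsolution of \eqref{eq:semilinear} itself} over one period, since its defect is $\lambda_1\eps\phi+O(\eps^2)<0$ for small $\eps$ by positivity of $\phi$ and $\lambda_1<0$, and then iterates the order-preserving period map to push the orbit monotonically away from $u$. I would either adopt that version for (ii) or simply cite \cite{DaH} for both parts, as the paper does.
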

%%%%%%%%%%%%%%%%%%%%%%%%%%%%%%lllllllll
The proof is found in \cite[pages 247-250]{DaH}.
%%%%%%%%%%%%%%%%%%%%%%%%%%%%%%%%%%%%%%%%%%%%%%%%%%%%%%%%%%%%%%%%%%%%%%%%%%
%%%%%%%%%%%%%%%%%%%%%%%%%%%%%%%%%%SSSSSSSSSSS
\section{Surfaces of revolution in $\mathbb R^3$ evolving in time}\label{surfaces}
\subsection{Instability results} In this section we consider compact surfaces of revolutions which are the locus of points in $\mathbb{R}^3$ generated by rotating the regular plane curve $r \to (\psi(r,t),
\chi(r,t))$  around the $z$-axis. We assume that for each $r\in I :=(a(t),b(t)),$ the functions $t\mapsto \psi(r, t)$, $t\mapsto \chi(r, t)$, $t\mapsto a(t)$, and $t\mapsto b(t)$, are $T-$periodic and $\psi(r, t)>0$ for every $t\in\erre$ and $r\in I$. This leads to a family of time-dependent surfaces of revolutions $\Omega_t$, parametrized by
 \begin{equation}\label{e2fn}
\left\{
\begin{array}{ll}
 \, x = \psi(r, t)\cos \theta
\\& \\
\textrm{ }y\,= \psi(r, t)\sin \theta,  \\&\\
 \textrm{ }z\, = \chi(r, t).
\end{array}
\right.\qquad (r,\theta, t)\in [a(t),b(t)]\times[0,2\pi)\times \mathbb {R}^+
\end{equation}
The metric depends on $t$ and is given by
\begin{equation*}%\label{metric}
ds^2= q^2(r,t)\,d r^2 + \psi^2(r, t) d \theta^2, \quad\textrm{ where } q=\sqrt{\psi_r^2 +\chi_r^2}
\end{equation*}
for $(r,\theta)\in I\times[0,2\pi)$. The  Laplace-Beltrami operator  is expressed as
\begin{align}\label{e3fbis}
\Delta_t u \,=\, \frac{1}{\psi q}\frac{\partial}{\partial r}\left(\frac{\psi}{q}\,u_r\right)
+\,
\frac 1{\psi^2}\frac{\partial^2 u}{\partial\theta^2},
\end{align}
and the Ricci (Gaussian) curvature of $\Omega_t$
is
\begin{equation}\label{e7fbis}
R(r, t)=\frac{-\psi_{rr}\chi_r^2 +\psi_r\chi_r\chi_{rr}}{\psi q^4}\,.
\end{equation}
The boundary
$\partial \Omega_t$ consists of the time-dependent  geodesic circles
\begin{eqnarray*}
C_{a,t}&:=& \{(\psi(a(t), t) \cos\theta, \psi(a(t), t) \sin\theta,\chi(a(t), t))\,
|\,\theta\in  [0,2\pi),\, t \in \mathbb R\}\,, \\
C_{b,t}&:=& \{(\psi(b(t), t) \cos\theta, \psi(b(t), t) \sin\theta,\chi(b(t), t)) \,
|\,\theta\in  [0,2\pi),\, t \in \mathbb R\}\,.
\end{eqnarray*}
For the sake of simplicity we shall assume that $\chi_r(a(t), t)>0$, $\chi_r(b(t), t)>0$ for every $t\in \mathbb {R}^+$.
\medskip

We can reduce our problem on cylindrical domains, if we replace
the variable $r$ by $\rho=\frac{r-a}{b-a}$. The metric of $\tilde\Omega_t$ then becomes
$$
\tilde ds^2= ( \tilde \psi_\rho^2 + \tilde \chi_\rho^2) d\rho^2 + \tilde \psi^2d\theta^2, \tx{where} \tilde \psi(\rho,t)=\psi(r(\rho),t),\,\tilde \chi(\rho,t)=\chi(r(\rho),t).
$$
The Ricci curvature and the Laplace-Beltrami operator are changed accordingly.
In the new variables
\begin{align*}
\tilde \Omega_t&:= \{(\tilde\psi(\rho, t) \cos\theta, \tilde\psi(\rho, t) \sin\theta,\tilde\chi(\rho, t)) \,
|\,(\rho,\theta, t)\in  (0,1)\times [0,2\pi)\times \mathbb R\},\\
\tilde C_{0,t}&:= \{(\psi(0, t) \cos\theta, \psi(0, t) \sin\theta,\chi(0, t))\,
|\,\theta\in  [0,2\pi),\, t \in \mathbb R\}\,, \\
\tilde C_{1,t}&:= \{(\psi(1, t) \cos\theta, \psi(1, t) \sin\theta,\chi(1, t)) \,
|\,\theta\in  [0,2\pi),\, t \in \mathbb R\}\,.
\end{align*}
As an illustration consider the following
\begin{exe} Suppose that a substance occupies a spherical cap  $\Omega_t$ on the sphere $\{|x|=R(t):x\in \mathbb{R}^3\}$. Let $r(t)\in (0,b(t))$ be the distance to the North Pole. Then $\Omega_t$ is described by
$$
(x,y,z)= R(t)\left(\sin\frac{r}{R(t)}\cos \theta, \sin\frac{r}{R(t)}\sin \theta, \cos\frac{r}{R(t)}\right),\quad (r,\theta,t)\in [0,b(t))\times [0,2\pi)\times (0,T).
$$
In this case we have
$$\psi(r,t)=R(t)\sin\frac{r}{R(t)}, \quad \chi(r,t)=R(t)\cos\frac{r}{R(t)} \tx{and} ds^2= dr^2+R^2(t)\sin^2\frac{r}{R(t)}d\theta^2,  \:b(t)\in (0, \pi R(t)).
$$
The boundary of $\Omega_t$ is
$$
C_{b,t}= R(t)\left(\sin\frac{b(t)}{R(t)}\cos \theta, \sin\frac{b(t)}{R(t)}\sin \theta, \cos\frac{b(t)}{R(t)}\right),\quad \theta\in (0,2\pi),\: t\in (0,T).
$$
\end{exe}
\medskip

We now discuss the reaction-diffusion problem in  $\Omega_t$ with boundaries $C_{0,t}$ and $C_{1,t}$,
\begin{equation}\label{e1fn}
\left\{
\begin{array}{ll}
\displaystyle {\frac{\partial u}{\partial t}= \Delta_{t} u +f(r,\theta,t,u,u_r,u_\theta) }
 & \textrm{in} \;\; Q_T\\& \\
\displaystyle{\frac{\partial u}{\partial r}}  =0  &
\textrm{on} \;\; \Gamma_T\\& \\
u(\cdot, t) = u(\cdot, t +T) & \textrm{in} \;\;  \Omega_t,\, \forall\,\, t\in \mathbb R\,.
\end{array}
\right.
\end{equation}
Lemma \ref{l:estimates}  obviously applies to this problem.

We start with a simple well-known observation.
%%%%%%%%LLLLLLLL
\begin{lemma}\label{lemma33} Suppose that $f=f(r,t,u,u_r,u_\theta)$ does not depend explicitly on $\theta$. Then any non-radial solution of \eqref{e1fn} is unstable.
\end{lemma}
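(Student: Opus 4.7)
The plan is to differentiate the equation in the angular variable $\theta$ and exhibit a sign-changing eigenfunction of the linearized operator around $u$ with eigenvalue $0$; Lemma \ref{l:eigenvalue} will then force $\lambda_1<0$, and Lemma \ref{l:instability} will give instability.

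Set $w:=u_\theta$, which is nontrivial precisely because $u$ is assumed non-radial. Since the coefficients $\psi$ and $q$ entering $\Delta_t$ in \eqref{e3fbis} depend only on $(r,t)$, the operator $\Delta_t$ commutes with $\partial_\theta$; and because $f=f(r,t,u,u_r,u_\theta)$ carries no explicit $\theta$-dependence, applying $\partial_\theta$ to $f(r,t,u,u_r,u_\theta)$ via the chain rule produces $f_u\,w+f_{u_r}\,w_r+f_{u_\theta}\,w_\theta$. Differentiating both the equation and the Neumann boundary condition in \eqref{e1fn} with respect to $\theta$ therefore yields
\begin{equation*}
\frac{\partial w}{\partial t}-\Delta_t w-f_u\,w-f_{u_r}\,w_r-f_{u_\theta}\,w_\theta=0\ \text{in }Q_T,\qquad w_r=0\ \text{on }\Gamma_T,
\end{equation*}
with $w(\cdot,t+T)=w(\cdot,t)$ inherited from $u$. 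This is precisely the eigenvalue problem \eqref{eq:linearized} linearized at $u$, with the right-hand side parameter equal to $0$; hence $0$ lies in the spectrum of the linearized operator, with eigenfunction $w$.

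Next I show that $w$ must change sign. Since $u(r,\cdot,t)$ is $2\pi$-periodic, $\int_0^{2\pi} w(r,\theta,t)\,d\theta=0$ pointwise in $(r,t)$; together with $w\not\equiv 0$, this rules out $w$ having constant sign. By Lemma \ref{l:eigenvalue}(iii) the principal eigenvalue $\lambda_1$ is simple with an eigenfunction of constant sign, so the one-dimensional principal eigenspace contains no sign-changing element. Therefore $0\neq\lambda_1$, and Lemma \ref{l:eigenvalue}(iv) forces $\lambda_1<0$. Lemma \ref{l:instability}(ii) then concludes that $u$ is unstable.

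The only delicate point is having enough smoothness of $u$ to differentiate the equation in $\theta$; this is standard for classical solutions of a semilinear parabolic problem with $C^1$ nonlinearity, as already assumed in the linearized stability framework of Section 2. The rest is a clean application of the eigenvalue machinery already established.
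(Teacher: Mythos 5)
Your proposal is correct and follows essentially the same route as the paper: differentiate the equation in $\theta$, recognize $u_\theta$ as a sign-changing eigenfunction of the linearized problem with eigenvalue $0$, invoke the simplicity and constant sign of the principal eigenfunction to conclude $\lambda_1<0$, and finish with Lemma \ref{l:instability}. Your explicit justification that $u_\theta$ changes sign via $\int_0^{2\pi}u_\theta\,d\theta=0$ is a small useful addition that the paper leaves implicit.
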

%%%%%%%%%%%%lllll
\begin{proof}
We differentiate the differential equation  \eqref{e1fn} with respect to $\theta$ and observe that $u_\theta$ is an eigenfunction corresponding to the eigenvalue $\lambda=0$ of the linearized problem
\begin{equation*}
\begin{cases}
\phi_t=\Delta_t\phi+f_{u}(r,t,u,u_r,u_\theta)\phi+f_{u_r}(r,t,u,u_r,u_\theta)\phi_r+f_{u_\theta}(r,t,u,u_r,u_\theta)\phi_\theta+\lambda \phi \tx{in}Q_T\\
\phi_r =0 \tx{on}\Gamma_T\\
\phi(\cdot, t) = \phi(\cdot, t +T)  \tx{in} \Omega_t,\, \forall\, t\in \mathbb R\,.
\end{cases}
\end{equation*}
Since $u_\theta$ changes sign, $0$ cannot be the principal eigenvalue and therefore $\lambda_1<0$. This together with  Lemma \ref{l:eigenvalue} establishes the assertion.

 \end{proof}

Consider now the solutions which are independent of $\theta$.
Hess used in his paper \cite{He}, Casten and Holland's trick to prove the instability of  solutions of \eqref{eq:semilinear} in domains $\Omega\subset\mathbb{R}^n$, which depend on the space variable. % For time-dependent solutions this method seemed to apply only for Neumann boundary conditions.
This idea has been used also in \cite{BPT} for stationary radial solutions on surfaces of revolutions. The next result is an extension of \cite[Thm.5.2]{BPT}.
%%%%%%%%%%%%%%%%%%%%%%%%%%%%%%%%%TTTTTTTTT
\begin{theorem}\label{thmB}
Let $u(r,t)$ be a radial solution of \eqref{e1fn} such that $u_r\neq 0$. Assume that $f=f(t,u,u_r,u_\theta)$ is independent of $r$. If
\begin{equation}\label{18}
\frac 1 q \left (\frac{\psi_r}{q\psi}\right)_r -\frac{q_t}{q}+f_{u_r}(t,u,u_r,0)\frac{q_r}{q}\leq 0,
\end{equation}
then $u$ is unstable.
\end{theorem}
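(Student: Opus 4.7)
The plan is to apply the Casten--Holland trick, adapted to the evolving metric by differentiating with respect to the intrinsic radial variable. Since $u_r$ is continuous on $[0,1]\times\mathbb{R}$, does not vanish in the interior, and is $T$-periodic, it has constant sign; after possibly reversing orientation I assume $u_r>0$ in the interior. The natural candidate for Lemma \ref{l:estimates} is then the intrinsic normal derivative
\[
v \;:=\; \frac{u_r}{q},
\]
which is strictly positive in $Q_T$, independent of $\theta$, $T$-periodic, and vanishes on $\Gamma_T$ by the Neumann boundary condition.

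The computational core is to derive the identity
\[
\frac{\partial v}{\partial t} - \Delta_t v - f_u\, v - f_{u_r}\, v_r \;=\; \Big[\frac{1}{q}\Big(\frac{\psi_r}{q\psi}\Big)_r - \frac{q_t}{q} + f_{u_r}\frac{q_r}{q}\Big]\,v,
\]
with $f$ and its derivatives evaluated at $(t,u,u_r,0)$. Differentiating \eqref{e1fn} with respect to $r$ (using that $f$ has no explicit $r$-dependence) yields $(u_r)_t=\partial_r\Delta_t u + f_u u_r + f_{u_r} u_{rr}$. Substituting $u_r=qv$, $u_{rr}=q_r v + q v_r$, and exploiting the clean representation $\Delta_t u = \frac{1}{q}v_r + \frac{\psi_r}{\psi q}v$ valid for radial functions, one verifies
\[
\frac{1}{q}\,\partial_r\Delta_t u \;=\; \Delta_t v + \frac{1}{q}\Big(\frac{\psi_r}{\psi q}\Big)_r v.
\]
The time derivative $(qv)_t = q_t v + q v_t$ produces the $-\frac{q_t}{q}v$ term after dividing through by $q$, while $f_{u_r} u_{rr}$ contributes $f_{u_r}\frac{q_r}{q} v + f_{u_r} v_r$. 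Collecting everything gives the displayed identity. By hypothesis \eqref{18} the bracketed coefficient is $\leq 0$, and $v>0$, so the left-hand side is $\le 0$ in $Q_T$; moreover $v=0$ on $\Gamma_T$ with $v>0$ inside gives $\partial_\nu v\le 0$ on $\Gamma_T$ by Hopf's lemma, so the boundary hypothesis of Lemma \ref{l:estimates} is also satisfied. That lemma then delivers $\lambda_1\le 0$.

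The main obstacle is to upgrade this to the strict inequality $\lambda_1<0$ required by Lemma \ref{l:instability}. The crux is that $v$ satisfies a Dirichlet condition $v|_{\Gamma_T}=0$, whereas the principal eigenfunction $\phi_1$ of \eqref{eq:linearized} satisfies the Neumann-type conormal condition and is strictly positive up to $\Gamma_T$; hence $v$ cannot be proportional to $\phi_1$. Testing the differential inequality for $v$ against the adjoint Neumann principal eigenfunction and integrating by parts produces a strictly negative Hopf boundary term (since $\partial_\nu v<0$ on $\Gamma_T$ while the adjoint eigenfunction is strictly positive there), which forces $\lambda_1<0$; equivalently, one invokes the strict comparison between Dirichlet and Neumann principal eigenvalues of the linearized operator. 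Lemma \ref{l:instability} then yields the instability of $u$.
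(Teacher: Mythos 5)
Your core computation is exactly the paper's: the substitution $v=u_r/q$, the representation $\Delta_t u=\tfrac{v_r}{q}+\tfrac{\psi_r}{\psi q}v$, differentiation in $r$, and the resulting identity coinciding with \eqref{eq:v}, followed by Lemma \ref{l:estimates} to get $\lambda_1\le0$. The genuine gap is your reduction to the case where $u_r$ has one sign. You justify it by reading the hypothesis ``$u_r\neq0$'' as pointwise nonvanishing in the interior, but the paper's proof is built precisely to avoid that restriction: it takes $v=|u_r|/q$, notes only that $v$ is \emph{nontrivial}, derives the differential inequality separately on the regions $Q_T^+$ and $Q_T^-$ where $u_r>0$ and $u_r<0$, and then invokes Kato's inequality to conclude that $v_t-\Delta_t v\le f_u v+f_{u_r}v_r$ holds weakly across the nodal set of $u_r$. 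Since $u_r$ necessarily vanishes on $\Gamma_T$ by the Neumann condition, and the theorem is meant to apply whenever $u_r\not\equiv0$, your argument covers only a strictly smaller class of solutions; to match the theorem you must either add Kato's inequality for $|u_r|/q$ or explicitly strengthen the hypothesis.

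For the strictness $\lambda_1<0$ you take a different route from the paper. The paper excludes $\lambda_1=0$ by the maximum-principle contradiction carried out in the proof of Theorem \ref{thmA}: if $\lambda_1=0$, then $w=|\nabla_t u|_t$ would have to equal a positive multiple of the principal eigenfunction and hence be strictly positive on $\overline{Q_T}$, which is impossible because $\nabla_t u$ vanishes at an extremum of $u$ (here, even more simply, because $w=0$ on $\Gamma_T$). Your duality argument --- testing against the positive principal eigenfunction of the adjoint periodic-parabolic problem and extracting a strictly negative boundary term --- is plausible and, if $v>0$ in the interior, the Hopf lemma does give $\partial_\nu v<0$ because $v$ satisfies the \emph{exact} linear equation \eqref{eq:v} (not merely the inequality; as a subsolution alone, Hopf at a boundary \emph{minimum} would give you nothing). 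But as written it is a sketch: the existence and positivity of the adjoint principal eigenfunction on an evolving domain, the form of the adjoint operator (which picks up a term from $\partial_t\mathfrak{g}_t$), and the integration by parts over $Q_T$ all need justification, as does the strict inequality between Dirichlet and Neumann principal eigenvalues for non-self-adjoint periodic-parabolic operators in your alternative formulation. The paper's argument is self-contained and also works when $u_r$ changes sign; yours, in its present form, inherits the constant-sign restriction.
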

%%%%%%%%%%%%%%%%%%%%%%%%%%%ttttttttttttt
\begin{proof}
We consider the function $v=\frac{|u_r|}{q}$, which, in view of our assumptions, is nontrivial  and we shall show that it satisfies the assumptions of Lemma \ref{l:estimates} in a weak sense.

In the region $Q^+_T$ where $u_r$ is positive, we have
$$
u_r=vq,\tx{and} u_{rr}= v_rq+vq_r \tx{and} u_{tr}= v_t q+ vq_t.
$$
With this notation \eqref{e1fn}  is  expressed  as
%\begin{align*}
%u_t= \frac{v_{rr}}{q} +v_r\frac{1}{\psi}\left(\frac{\psi}{q}\right)_r +v\left[\left(\frac{q_r}{q^2}\right)_r+ \psi^{-1}\left(\frac{\psi}{q}\right)_r\right] +f, \tx{in} \Omega_t^+.
%\end{align*}
\begin{align*}
u_t \,=\, \frac{v_r}{q} + v \frac{\psi_r}{q\psi}+ f \quad \textrm{in}\;\; Q^+_T\,.
\end{align*}
If we differentiate this equation with respect to $r$, we get
\begin{align}\label{eq:v}
v_t= \Delta_t v +v\left[\frac{1}{q}\left(\frac{\psi_r}{q\psi}\right)_r -\frac{q_t}{q}+f_{u_r}\frac{q_r}{q}\right]  + f_u v+f_{u_r}v_r\,.
\end{align}
The same equation holds in the region $Q^{-}_T$
where $u_r$ is negative. Under our assumptions we have
\begin{align}\label{i:v}
v_t- \Delta_t v \leq    f_u v+f_{u_r}v_r \quad \tx{in}\;\;  Q^+_T\cup Q^-_T.
\end{align}
By Kato's inequality \eqref{i:v} holds in $Q_T$ in the weak sense. On the boundary $v$ satisfies, in view of the Neumann boundary conditions, $v=0$. Hence $\frac{\partial v}{\partial\nu}\leq 0$. By Lemma \ref{l:estimates} we deduce that $\lambda_1\leq0$.
The case $\lambda_1=0$ can be excluded by a contradiction argument which holds in a more general case and which is proved in Theorem \ref{thmA}. Then $\lambda_1<0$. The instability is now a consequence of Lemma \ref{l:instability}.
\end{proof}

\begin{rem}\label{rem33}
The geodesic curvature of the parallel circles $r=const$ for each $t\in [0, T]$ is
\[k_g(r, t)=\frac{\psi_r}{\psi \sqrt{\chi_r^2+\psi_r^2}}\,.\]
A simple computation yields
\begin{equation}\label{e200}
\frac 1 q \left (\frac{\psi_r}{q\psi}\right)_r = - R(r,t) - k_g^2(r, t)\,.
\end{equation}
Hence condition \eqref{18} can be written as
\[-R - k_g^2 \leq  \frac{q_t}{q^2}- f_{u_r}(t,u,u_r,0)\frac{q_r}{q^2}\,.\]
In the special case where $q=1$ and $f=f(u)$, this condition becomes
\[- R - k_g^2 =  \left(\frac{\psi_r}{\psi}\right)_r\leq 0,\]
which is in accordance with the results in \cite{BPT}.
\end{rem}

\begin{exe} Consider the  dilations of the unit sphere of the form $$(x,y,z)=(\rho_1(t)\sin r\cos\theta, \rho_1(t)\sin r,\sin \theta, \rho_2(t)\cos r)$$ where $\rho_i(0)=\rho_i(T)=1$ for $i=1,2$, $r\in (0,1)$, $\theta \in (0,\pi)$. In this case
$$
q^2= \rho_1^2(t)\cos^2r +\rho_2^2(t)\sin^2 r,\quad \psi^2(r,t)=\rho^2_1(t)\sin^2 r,
$$
and
$$
\frac {1}{ q }\left (\frac{\psi_r}{q\psi}\right)_r -\frac{q_t}{q}=-\frac{1}{q^2\sin^2r}-\frac{(\rho^2_2-\rho^2_1)\cos^2r}{q^4}-\frac{\dot\rho_1\rho_1\cos^2r+\dot\rho_2\rho_2\sin^2r}{q^2}.
$$
In particular if $\rho_1=\rho_2=\rho$ then
$$
\frac {1}{ q }\left (\frac{\psi_r}{q\psi}\right)_r -\frac{q_t}{q}=-\frac{1+\rho\dot\rho\sin^2r}{\rho^2\sin^2r}.
$$

\end{exe}
\begin{exe} Next we consider the deformation of the cone of the type $$(x,y,z)=(\rho_1(t)r\cos \theta, \rho_1(t)r\sin\theta,\rho_2(t)r).$$
Here $q^2=\rho_1^2(t)+\rho_2^2(t) \tx{and} \psi^2=\rho^2_1(t)r^2,
$
and
$$
\frac {1}{ q }\left (\frac{\psi_r}{q\psi}\right)_r -\frac{q_t}{q}=-\frac{1}{(\rho^2_1+\rho^2_2)r^2}-\frac{\dot\rho_1\rho_1+\dot\rho_2\rho_2}{\rho_1^2+\rho_2^2}.
$$
\end{exe}
%%%%%%%%%%%%%%%%%%%%%%%%%%
%%%%%%%%%%%%%%%%%%%%%%%%%%%
\subsection{The construction of a stable solution} Next we show that if condition \eqref{18} is not verified, then there exist examples for which  T-periodic solutions $u$ with $u_r(r,t)\neq 0$  are stable. For this purpose we shall use a result of  \cite{BPT}.

Let $\Omega$ be a time-independent surface in $\mathbb{R}^3$ where $\psi^2_r +\chi^2_r=1$ and $r\in (0,1)$. Consider on $\Omega$ the problem
\begin{equation}\label{e30f}
\left\{
\begin{array}{ll}
- \Delta U  = F(U)
 & \textrm{in} \;\;  \Omega
\\& \\
\displaystyle{\frac{\partial U}{\partial \nu}}  =0  &
\textrm{in} \;\; \partial \Omega\,.
\end{array}
\right.
\end{equation}
Here $\Delta$ stands for the Laplace- Beltrami operator on $\Omega$.
By \cite[Theorem 4.1]{BPT},  there exists under the assumption $\left(\frac{\psi_r}{\psi}\right)_r>0$ a function $F\in C^1(\mathbb R)$ such that problem \eqref{e30f}  admits an
 asymptotically stable solution $U=U(r)$ satisfying $U_r(r)>0$ in $(0,1)$.

Let $\tilde\psi( r,t)=\zeta(t)\psi(r)$ and $\tilde\chi( r,t)=\zeta(t)\chi(r)$,   $\zeta(t+T)=\zeta(t)>0$ and consider the family of surfaces of revolution $\tilde \Omega_t$, defined as in Section 3.1. Note that $q=\zeta(t)$ and in view of  \eqref{e3fbis}
 $$
 \Delta_t U= \frac{1}{\zeta^2}\Delta U=-\frac{1}{\zeta^2}F(U).
 $$
Let $\phi>0 \in C^1(\mathbb R)$ be $T$-periodic and define
\[ u(r, t):= U(r) \phi(t), \qquad\mbox{ for } r\in [0, 1],\, t \in \mathbb R\,. \]
Then
\begin{align}\label{stable}
\partial_t u - \Delta_t u &= \frac{\phi'(t)}{\phi(t)} u - \frac{\phi(t)}{\zeta^2(t)} \Delta U = \frac{\phi'(t)}{\phi(t)} u + \frac{\phi(t)}{\zeta^2(t)} F\left(\frac{u}{\phi(t)} \right)=:f(t, u) \tx{in} \tilde \Omega_t,\\
\nonumber \frac{\partial u}{\partial \nu_t}&=0 \tx {on} \tilde C_{0,t}\cup \tilde C_{1,t}.
\end{align}
%%%%%%%%%%%%%%%%%%%%%%%%%%TTTTTTTTTTTTTTTTTTTTTT
\begin{theorem}\label{thm2f}
If
\begin{equation}\label{e34f}
\left (\frac{\psi_r}{\zeta\psi}\right)_r -\frac{\zeta_t}{\zeta}> 0,
\end{equation}
then $u$ is an asymptotically stable solution solution of problem \eqref{stable}
\end{theorem}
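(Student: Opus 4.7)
By Lemma \ref{l:instability}(i), to establish asymptotic stability of $u=U(r)\phi(t)$ it is enough to show that the principal eigenvalue $\lambda_1$ of the problem obtained by linearizing \eqref{stable} at $u$ is strictly positive. My plan is to produce $\lambda_1$, together with an explicit positive eigenfunction, by separation of variables.

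From the form of $f$ in \eqref{stable} one finds
\[
f_u(t,U\phi)=\frac{\phi'(t)}{\phi(t)}+\frac{F'(U)}{\zeta^{2}(t)},
\]
so the linearized operator reads $L\eta=\eta_t-\Delta_t\eta-f_u(t,U\phi)\eta$, with Neumann boundary condition and $T$-periodicity. For radial functions the scaling identity $\Delta_t\eta=\zeta^{-2}(t)\Delta_{\Omega}\eta$ holds, where $\Delta_{\Omega}$ is the static Laplace--Beltrami operator on $\Omega$. I would therefore try the ansatz $\eta(r,t)=g(r)h(t)$ with $g=\eta_1>0$ the first Neumann eigenfunction of $-\Delta_\Omega-F'(U)\,\mathrm{id}$ on $\Omega$, with eigenvalue $\mu_1$. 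The $F'(U)$ terms then cancel and $L\eta=\lambda\eta$ reduces to the scalar ODE
\[
\frac{h'(t)}{h(t)}=\lambda+\frac{\phi'(t)}{\phi(t)}-\frac{\mu_1}{\zeta^{2}(t)}.
\]
Imposing the $T$-periodicity of $h$, together with $\phi(T)=\phi(0)$, forces
\[
\lambda_1=\frac{\mu_1}{T}\int_0^T\frac{dt}{\zeta^{2}(t)},
\]
and the resulting $\eta(r,t)=\eta_1(r)h(t)$ is strictly positive and satisfies the Neumann condition since $\partial_r\eta_1=0$ on $\partial\Omega$; by Lemma \ref{l:eigenvalue}(iii)--(iv) this is the principal eigenvalue.

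It remains to verify $\mu_1>0$. This is precisely the asymptotic stability of $U$ for the static problem \eqref{e30f}, which by \cite[Thm.~4.1]{BPT} holds under $(\psi_r/\psi)_r>0$; since $\zeta$ is positive and $T$-periodic, $\zeta_t$ vanishes somewhere, and the pointwise hypothesis \eqref{e34f} rewritten as $(\psi_r/\psi)_r(r)>\zeta_t(t)$ at every $(r,t)$ implies $(\psi_r/\psi)_r>0$ on $\Omega$. With $\mu_1>0$ and $\zeta>0$, we conclude $\lambda_1>0$, and Lemma \ref{l:instability}(i) finishes the argument.

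The main obstacle I foresee is running the classical supersolution argument of Theorem \ref{thmB} in reverse: its natural candidate $v=u_r/q=U_r\phi/\zeta$ does produce a strict bulk inequality $Lv>0$ matching \eqref{e34f}, but $v$ vanishes on $\tilde C_{0,t}\cup\tilde C_{1,t}$, and Hopf's lemma forces its conormal derivative to have the \emph{wrong} sign for the supersolution version of Lemma \ref{l:estimates}. The separation-of-variables construction above bypasses this difficulty; equivalently, $w:=\eta_1(r)\phi(t)$ is itself a strict positive supersolution with $\partial_\nu w=0$ on the boundary and $Lw=\mu_1 w/\zeta^{2}>0$, so Lemma \ref{l:estimates} gives $\lambda_1\ge 0$, with the strict inequality ruled in by the explicit formula (or by the contradiction argument referenced at the end of the proof of Theorem \ref{thmB}).
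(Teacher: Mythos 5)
Your argument is correct, but it takes a genuinely different route from the paper's. The paper's proof is Theorem \ref{thmB} run in reverse: it sets $v=u_r/\zeta=U_r\phi/\zeta>0$, computes $v_t-\Delta_t v=\bigl[\zeta^{-2}(\psi_r/\psi)_r-\zeta_t/\zeta\bigr]v+f_u(t,u)v$, uses \eqref{e34f} to obtain the supersolution inequality $v_t-\Delta_t v\ge f_u v$, and invokes the second half of Lemma \ref{l:estimates} to get $\lambda_1\ge0$ --- with no discussion of the boundary behaviour of $v$ and no argument excluding $\lambda_1=0$. The obstacle you flag at the end is therefore real and is left unaddressed in the paper: $v$ vanishes on $\tilde C_{0,t}\cup\tilde C_{1,t}$, so its outward conormal derivative is nonpositive (strictly negative by Hopf), which is the wrong sign for the supersolution half of Lemma \ref{l:estimates}. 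Your separation-of-variables construction bypasses this cleanly: since $q=\zeta(t)$ one has $\Delta_t=\zeta^{-2}(t)\Delta_\Omega$ (in fact for all functions, not only radial ones), and the potential $f_u=\phi'/\phi+F'(U)/\zeta^2$ carries the same factor $\zeta^{-2}$ in front of its spatial part, so the periodic-parabolic eigenvalue problem genuinely decouples and yields the explicit value $\lambda_1=\frac{\mu_1}{T}\int_0^T\zeta^{-2}\,dt>0$ with positive, $T$-periodic eigenfunction $\eta_1(r)h(t)$ satisfying the Neumann condition. The only point you should make fully explicit is that a positive eigenfunction identifies the principal eigenvalue; this follows, for instance, by applying both halves of Lemma \ref{l:estimates} to the operator shifted by $\lambda$. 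Your approach buys more than the paper's: it gives strict positivity of $\lambda_1$ (hence genuine asymptotic stability via Lemma \ref{l:instability}(i)) without any limiting or contradiction argument, and it reveals that \eqref{e34f} is not actually needed beyond guaranteeing $(\psi_r/\psi)_r>0$, which is already the standing hypothesis under which \cite[Theorem 4.1]{BPT} produces $F$ and $U$ with $\mu_1>0$. The one caveat is that ``asymptotically stable'' in \cite[Theorem 4.1]{BPT} must be read as linearized stability, i.e.\ $\mu_1>0$; that is also how the paper uses it, so your reliance on it is consistent.
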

%%%%%%%%%%%%%%%%%%%%%%%%%ttttttttttttt
\begin{proof}  Observe that $v=u_r/\zeta$ satisfies
$$ \partial_t v - \Delta_t v = \left[\frac {1}{\zeta^2}\left(\frac{\psi_r}{\psi}\right)_r -\frac{\zeta_t}{\zeta} \right]+f_u(t,u)v.
$$
From our assumptions we then
conclude that
$v_t -\Delta_t v\geq f_u v$ in $\tilde \Omega_t.$  The assertion now follows from Lemma \ref{l:estimates}.
\end{proof}

%%%%%%%%%%%%%%%%%%%%%%%%%%%%%%%%%%%%%%%%%%
%%%%%%%%%%%%%%%%%%%%%%%%%%%%%%%%%%%%%%%%%%%SSSSSSSSSSS
\section{T-periodic parabolic problems on Riemannian manifolds}
\subsection{Basic notions from Riemannian geometry.}\label{sec1}
For the reader's convenience we first recall  some notions and results from Riemannian geometry, see e.g. \cite{AMR}. Let $M$ be a Riemannian manifold of dimension $m$ endowed with a metric $\metric=\langle\cdot,\cdot\rangle$. We denote by $p$ an arbitrary point of $M$ and
let $x^1,\ldots, x^m$ be the coordinate functions in  the local chart $U$. Then we have
\begin{equation}
\label{GP1.1}
\metric=g_{ij}\,dx^{i}\otimes dx^{j}
\end{equation}
where $dx^{i}$ denotes the differential of the function $x^{i}$ and
$g_{ij}$ are the (local) components of the metric, defined by
$g_{ij}=\langle\frac{\partial}{\partial x^{i}},
\frac{\partial}{\partial x^{j}}\rangle$.
We will denote by $[g^{ij}]$ the inverse of the matrix $[g_{ij}]$. In the sequel we
shall use the Einstein summation convention over repeated indices.

For any smooth function $u:M\longrightarrow\erre$, the \emph{gradient} of $u$ relative to the metric $g$ of $M$, $\nabla u$, is the vector field dual to the $1$-form $du$, that is
\[
\langle\nabla u, X\rangle = du(X)=X(u)
\]
for all smooth vector fields $X$ on $M$. Note that in local coordinates
we have $\nabla u=u^i\frac{\partial}{\partial x^i}$ with
$$
u^j=g^{ij}\frac{\partial u}{\partial x^i},\qquad\qquad \frac{\partial u}{\partial x^i}=g_{ij}u^j
$$
and
\begin{equation}\label{13}
|\nabla u|^2=\langle \nabla u,\nabla u\rangle= g^{ij}\frac{\partial u}{\partial x^i}\frac{\partial
u}{\partial x^j}.
\end{equation}
The \emph{divergence} of a vector field $X$ on $M$ is given by the trace of $\nabla X$, the covariant derivative of $X$, where $\nabla$ is the (unique) Levi-Civita connection associated to the metric $\metric$. If $X=X^i\frac{\partial}{\partial x^i}$,
it can be expressed in local coordinates as
$$
\operatorname{div}X= \frac{\partial X^i}{\partial
x^i}+X^k\Gamma^i_{ki},
$$
where $\Gamma^k_{ij}$ are the Christoffel symbols
\[
\Gamma^k_{ij} =
\frac{1}{2}g^{kl}\left(\frac{\partial g_{il}}{\partial x^j}+\frac{\partial g_{jl}}{\partial x^i}-\frac{\partial g_{ij}}{\partial x^l}\right).
\]

The Hessian of $u$  is defined as the $2$--tensor $\Hess(u)=\nabla du$ and its components $u_{ij}$ are in local coordinates

$$
u_{ij}=\frac{\partial ^2u}{\partial x^i\partial x^j}
-\Gamma^k_{ij}\frac{\partial u}{\partial x^k}.
$$
Note that
$$
|\operatorname{Hess}(u)|^2= g^{ik}g^{jl}\left(\frac{\partial
^2u}{\partial x^i\partial x^j} -\Gamma^k_{ij}\frac{\partial
u}{\partial x^k}\right)\left(\frac{\partial ^2u}{\partial
x^k\partial x^l} -\Gamma^s_{kl}\frac{\partial u}{\partial
x^s}\right),
$$
and that $\nabla |\nabla u|^2$ is a vector field, which for every smooth vector field $X$ satisfies
\begin{equation}\label{8}
\langle \nabla |\nabla u|^2,X\rangle= 2\operatorname{Hess}(u)(\nabla u,X).
\end{equation}

The
\emph{Laplace--Beltrami operator} of $u$ is the trace of the Hessian, or equivalently the divergence of the gradient, i.e.
\[
\label{GP3.4}
\Delta u =\operatorname{Tr}(\Hess(u))=\operatorname{div}(\nabla u).
\]
In local coordinates it has the form
$$
\Delta u= \mathfrak g^{-1}\frac{\partial}{\partial
x^i}\left(\mathfrak g \, g^{ij} \frac{\partial u}{\partial
x^j}\right),\quad \tx{where} \mathfrak g=\sqrt{\rm{det}(g_{ij})}.
$$

We denote by  $\operatorname{Ric}$ the \emph{Ricci tensor} which  is expressed as
\[
R_{ij}=R_{ji}=\frac{\partial\Gamma^l_{ij}}{\partial x^l}-\frac{\partial\Gamma^{l}_{il}}{\partial x^j}+\Gamma^k_{ij}\Gamma^l_{kl}-\Gamma^k_{il}\Gamma^l_{kj},
\]
Therefore,  if $X=X^i\frac{\partial}{x^i}$, $Y=Y^i\frac{\partial}{x^i}$ are vector fields, we have $\operatorname{Ric}(X,Y)=R_{ij}X^iY^j$.

Next we recall the well--known Bochner--Weitzenb\"ock formula, which will play a crucial role in what follows: for all $u \in C^3(M)$ we have
      \begin{equation}\label{4}
      \frac{1}{2}\Delta \abs{\nabla u}^2 = \abs{\Hess(u)}^2 + \ricc \pa{\nabla u, \nabla u} + \langle\nabla \Delta u, \nabla u\rangle.
      \end{equation}
Moreover for every $u \in C^2(M)$ we have
\begin{equation}\label{21}
      |\nabla|\nabla u|^2|^2\leq4|\operatorname{Hess}(u)|^2|\nabla u|^2,
\end{equation}
see e.g. \cite[formula (3.6)]{BPT}.

If $\Omega\subset M$ is an open set with regular boundary $\partial\Omega$, we will use $\nu$ to denote the outer normal unit vector to $\partial\Omega$  in the tangent space $T_pM$. We shall assume that $\partial\Omega$ is orientable and that the outer normal is well-defined and continuous. Hence for any $p\in\partial\Omega$ there exist a neighborhood $U\subset M$ and a function $\varphi : U \longrightarrow \erre^+$ such that $U \cap \partial\Omega = \varphi^{-1}(0)$ and $\nabla\varphi\neq0$ in $U$.
Then the outward normal unit vector can be computed as $\nu=-\frac{\nabla \varphi}{|\nabla \varphi|}$.

Next we introduce the second fundamental form of $\partial\Omega$ with respect to $M$. For any $p\in\partial \Omega$ let $X,Y$ be in the tangent space of $\partial \Omega$ through $p$, and denote by $\nabla_X\nu$ the
covariant derivative of $\nu$ along $X$. Then
\[
\sff(X,Y)=-\langle \nabla_X\nu,Y\rangle=\langle \nu,\nabla_XY\rangle,
\]
and if $X=X^i\frac{\partial}{\partial x^i}$ and $Y=Y^i\frac{\partial}{\partial x^i}$, in local coordinates it takes the form
\[
\sff(X,Y)=-g_{kl}\left(\frac{\partial \nu^k}{\partial x^i}+\Gamma^k_{ij}\nu^j\right)X^iY^l.
\]

Finally, we recall that for any function $u\in C^2(\overline{\Omega})$ such that $\frac{\partial u}{\partial \nu}=\langle\nabla u,\nu\rangle=0$ on $\partial\Omega$, the vector $\nabla u$ is tangential to $\partial \Omega$,  and
\begin{equation}\label{15}
\frac{1}{2}\frac{\partial}{\partial\nu}|\nabla u|^2=\sff(\nabla u,\nabla u)\qquad\mbox{ on }\partial\Omega,
\end{equation}
see e.g. \cite[Lemma 3.4]{BPT} for the proof.

When we deal with a smooth family of metrics $g(t)=\langle\cdot,\cdot\rangle_t$ with local coefficients $g_{ij}(t)=\langle\frac{\partial}{\partial x^{i}},
\frac{\partial}{\partial x^{j}}\rangle_t$, for $t\in\erre$, we will use $\operatorname{Ric}_t$, $\nabla_t$, $\operatorname{Hess}_t$, $\operatorname{div}_t$, $\Delta_t$,  $|\cdot|_t$, $\sff_t$ and $\nu_t$ to denote the corresponding geometric objects relative to the metric $g(t)$ for $t\in\erre$.
%%%%%%%%%%%%%%%%%%%%%%%%%%%%%%%%%%%%%%%%%%%%%%%%%%%%%%%%%%%%%%%%%
\subsection{Parabolic problem} In this subsection we study stability and instability of $T$-periodic classical solutions of a reaction-diffusion process.
Here $\Omega_t\subseteq M$ is a family of  $T$-periodic submanifolds (with or without boundary) such that $\Omega_t$ is diffeomorphic to $\Omega_0$ for every $t$ under the smooth $T$-periodic map $\Psi(t,\cdot): \Omega_0\rightarrow \Omega_t$. Denote by $g(t)$ the metric induced on $\Omega_t$ by the immersion in $(M,g)$. In the presence of a boundary, $\nu_t$ stands for the unit normal vector field on $\partial\Omega_t$ which belongs to the tangent space of $\Omega_t$.

The process is described by the Neumann boundary value problem
\begin{equation}\label{6}
  \begin{cases}
    \frac{\partial u}{\partial t}-\Delta_tu=f(p,t,u,\nabla_t u) & \mbox{in } Q_T,\\
    \frac{\partial u}{\partial \nu_t}=0 & \mbox{on }\Gamma_T.
  \end{cases}
\end{equation}
If there is no boundary, the Neumann condition in \eqref{6} is dropped.

Here $f:M\times\mathbb{R}\times\mathbb{R}\times TM\longrightarrow\erre$ is a $C^1$, $T$-periodic function and $TM$ is the tangent bundle of $M$.
If at any point on $M$ we identify the tangent bundle $TM$ with its tangent space,  and if we denote by $d_{TM}f(p,t,\xi,\cdot)$ the differential of the map $f(p,t,\xi,\cdot):TM\longrightarrow\erre$, then there exists a continuous, $T$-periodic vector field $X(p,t,u,\nabla_tu)$, such that
\[
d_{TM}f(p,t,u(t,\cdot),\nabla_tu(t,\cdot))V=\langle X(p,t,u(t,\cdot),\nabla_tu(t,\cdot)), V\rangle_t \quad \forall V \in TM.
\]
We now introduce the operator
\begin{equation}\label{3}
  \mathcal{L}:=\frac{\partial}{\partial t}-\Delta_t - \langle X(p,t,u,\nabla_tu), \nabla_t\,\cdot\,\rangle_t-m_0,
\end{equation}
where $m_0=\frac{\partial f}{\partial\xi}(p,t,u,\nabla_tu)$.
The linearization of problem \eqref{6} at $u$ is
\begin{align*}
 \mathcal{L} \phi&=\lambda \phi \quad\tx{in } Q_T, \\
 \nonumber   \frac{\partial \phi}{\partial \nu_t}&=0 \quad\tx{on } \Gamma_T.
 \end{align*}

%%%%%%%%%%%%%%%%%%%%%%%%%%%%%%%%%%%%%%%%%%%%%sssssssss
%%%%%%%%%%%%%%%%%%%%%%%%%%%%%%%%%%
\subsection{Motivation}
 \subsubsection{}Let $u$ denote the density of a substance or of a population which occupies at time $t$ the domain $\Omega_t$
 of $\mathbb{R}^n$. We assume that there is  no flux of the substance across the boundary of the domain, {\sl i.e. for every $t\in\mathbb{R}$}
 \[
\frac{\partial u}{\partial \nu_t}=0\qquad\textrm{ on }\partial\Omega_t\times\{t\},
\]
where $\nu_t$ denotes the outward normal unit vector to $\partial\Omega_t$. If the diffusive flux vector $J$ of the substance obeys Fick's first law, and if in addition there is a source  $F(p,t,u)$,  then in an isotropic media we have
\[
J=-\nabla u +F(p,t,u).
\]

Fick's second law in the presence of an outside force $H(p,t,u,\nabla_0 u)$ implies that for every subdomain $\Omega'\Subset\Omega_t$ the change of the total mass is given by
\begin{align*}
\frac{d}{dt}\int_{\Omega'}u\,dx&= -\int_{\partial\Omega'}\langle J,\nu\rangle\,d\sigma+\int_{\Omega'} H(p,t,u,\nabla u)\,dx,
\end{align*}
where $\nu$ is the outer normal of $\Omega'$ and $d\sigma$ is the surface element of $\partial \Omega'$. By the divergence theorem
\begin{align}\label{Fick2}
\frac{d}{dt}\int_{\Omega'}u\,dx&=\int_{\Omega'}(\Delta u -\operatorname{div} F+H)\:dx.
\end{align}
We now assume that for every $t$, $\Omega_t$  is diffeomorphic to a fixed domain $\Omega_0\subset \mathbb{R}^n$  under the smooth map $\Psi(t,\cdot): \Omega_0\rightarrow \Omega_t$. If in local coordinates the metric tensor of $M$ is $\metric=\delta_{ij}\,dx^{i}\otimes dx^{j}
$, after the change of coordinates $x^i=\psi^i(t,y)$ it becomes
$$
g(t)=\frac{\partial \psi^ i}{\partial y^k}\frac{\partial \psi^ i}{\partial y^s}\,dy^{k}\otimes dy^{s}=:\tilde g_{ks}(y,t)\,dy^{k}\otimes dy^{s} .
$$
The corresponding volume element is $dx=\mathfrak{g}_tdy$.  After the transformation $\psi^{-1}$, \eqref{Fick2} assumes the form
$$
\int_{\psi^{-1}(\Omega')}\left (\frac{\partial u}{\partial t} + \frac{1}{\mathfrak{g}_t}\frac{\partial \mathfrak{g}_t}{\partial t} u\right)\underbrace{\mathfrak{g}_t\,dy}_{d\mu_t}=\int_{\psi^{-1}(\Omega')}(\Delta_t u -\operatorname{div}_t F +H)d\mu_t.
$$
Since this relation holds for arbitrary $\Omega'\Subset\Omega_t$, and consequently $\psi^{-1}(\Omega') \Subset \Omega_0$, we deduce that
\begin{align} \label{diffusion}
\frac{\partial u}{\partial t}=&\Delta_t u -\operatorname{div}_t F +H - \frac{1}{\mathfrak{g}_t}\frac{\partial \mathfrak{g}_t}{\partial t} u \tx{in} \Omega_0\times (0,T),\\
\nonumber &\tilde g^{ks}\frac{\partial u}{\partial y_k}\nu_s =0 \tx{on} \partial \Omega_0\times (0,T),
 \end{align}
where $\nu$ is the outer normal of $\Omega_0$. The divergence of $F$ consists of two parts, namely
$$
\operatorname{div}_t F=\langle\frac{\partial F}{\partial\xi}(p,t,u),\nabla_tu\rangle_t+h(y,t,u),
$$
where $h(\cdot,t,\xi)$ is the divergence of the vector field $F(\cdot,t,\xi)$ on $\Omega_0$ with $t$ and $\xi$ fixed.

\begin{exe}   Suppose that a chemical substance occupies at time $t=0$ the domain is $\Omega_0\subset \mathbb{R}^n$ and as time evolves the domain $\Omega_t\subset  \mathbb{R}^n$. We assume that there is a smooth family of  diffeomorphisms $\psi(t,\cdot):\Omega_0 \to \Omega_t$. The standard Euclidean metric on $\Omega_t$ is after this mapping $\tilde g_{ij}(\cdot,t)=\p_{y^i} \psi(\cdot,t)\cdot\p_{y^j} \psi(\cdot,t)$.

Now suppose that the growth is isotropic in the different directions, i.e. for every component of $x=\psi(t,y)$ we then have
$x^i(y,t)= \rho_i(t) y^i$. Then
\begin{align*}
u_t -\Delta_t u + \frac{\p_t\mathfrak g}{\mathfrak g} u= u_t  -\sum_{i=1}^n \rho_i^{-2} \frac{\p ^2u}{\p x_i ^2} - \sum_{i=1}^n\frac{d \log\rho_i}{dt}u.
\end{align*}
\end{exe}

%%%%%%%%%%%%%%%%%%%%%%%

%%%%%%%%%%%%%%%%%%%%%%%%%%%%%%%%%%%%%%%%
%%%%%%%%%%%%%%%%%%%%%%%%%%%%%%%
\subsection{Main results}

In order to state our main result concerning the instability of the solutions of  \eqref{6}, we need the
 the $2$--tensor $h(t)$ given in local components by
\begin{equation}\label{14}
  h_{ij}(t)=\frac{1}{2}g_{ir}(t)g_{sj}(t)\frac{\partial g^{rs}}{\partial t}(t).
\end{equation}
%%%%%%%%%%%%%%%%%TTTTTTTTTTTT
\begin{theorem}\label{thmA}
Let $f=f(t, u, \nabla_t u):\erre\times\erre\times TM\longrightarrow\erre$ be a $T$-periodic, $C^1$ function  and assume that
$u$ is a $C^3$, $T$-periodic solution of problem \eqref{6} and moreover $\sff_t(V,V)\leq0$ for every vector field $V$ on $\partial\Omega_t$ and every $t\in\erre$.
Assume that for every vector field $X$
\begin{equation}\label{17}
  h(t)(X,X)-\operatorname{Ric}_t(X,X)\leq0.
\end{equation}
If, for some $t\in (0, T)$, $u$ is non constant with respect to $x$, then $u$ is unstable.
\end{theorem}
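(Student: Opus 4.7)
The plan is to imitate the Casten--Holland approach used in Theorem \ref{thmB}, but in the geometric setting of \eqref{6}: I will test the linearized operator
\[
  \mathcal{L} = \partial_t - \Delta_t - \langle X, \nabla_t \,\cdot\,\rangle_t - m_0
\]
against the non-negative function $v := |\nabla_t u|_t$, which is non-trivial because $u$ is spatially non-constant at some $t_0\in(0,T)$. If I show that $v$ is a weak subsolution $\mathcal{L}v \le 0$ in $Q_T$ together with $\partial_{\nu_t}v \le 0$ on $\Gamma_T$, Lemma \ref{l:estimates} will yield $\lambda_1 \le 0$; ruling out the equality will give $\lambda_1<0$, and instability will then follow from Lemma \ref{l:instability}.

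To derive the differential inequality, first work with $w := |\nabla_t u|_t^2 = g^{ij}(t)u_i u_j$. Differentiating in $t$ and using $\partial_t g^{ij} = 2g^{ik}g^{jl}h_{kl}$ (a direct consequence of \eqref{14}) produces the extra term $\partial_t w = 2h(t)(\nabla_t u,\nabla_t u) + 2\langle \nabla_t u, \nabla_t u_t\rangle_t$. Combining this with the Bochner--Weitzenb\"ock formula \eqref{4} for $\Delta_t w$, with the equation $u_t - \Delta_t u = f(t,u,\nabla_t u)$, and with the identity
\[
  \langle \nabla_t u, \nabla_t f\rangle_t \;=\; m_0\, w + \Hess_t u(\nabla_t u, X) \;=\; m_0\, w + \tfrac{1}{2}\langle X, \nabla_t w\rangle_t,
\]
where the last equality uses \eqref{8}, one arrives at
\[
  \partial_t w - \Delta_t w - \langle X, \nabla_t w\rangle_t - 2m_0 w \;=\; 2[h(t)-\ricc_t](\nabla_t u,\nabla_t u) - 2|\Hess_t u|_t^2.
\]
Hypothesis \eqref{17} makes the first term on the right-hand side non-positive. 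Writing $w=v^2$ and invoking the pointwise bound $|\nabla_t v|_t^2 \le |\Hess_t u|_t^2$ (which follows at once from \eqref{21}), division by $2v$ on $\{v>0\}$ gives $\mathcal{L}v \le 0$ classically, and a Kato-type approximation of $v$ by $\sqrt{w+\eps}$ with $\eps\downarrow 0$ extends this to a weak inequality on all of $Q_T$. The boundary inequality $\partial_{\nu_t}v\le 0$ on $\Gamma_T$ is obtained from \eqref{15} applied to the Neumann condition for $u$, together with $\sff_t\le 0$. Lemma \ref{l:estimates} then yields $\lambda_1 \le 0$.

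The hard part is excluding $\lambda_1 = 0$; this is precisely the general fact deferred in the proof of Theorem \ref{thmB}. Assuming for contradiction that $\lambda_1 = 0$, let $\phi > 0$ be the corresponding eigenfunction and set $c := v/\phi \ge 0$. Performing the substitution used in the proof of Lemma \ref{l:estimates}, $c$ satisfies a linear $T$-periodic parabolic inequality with \emph{no} zeroth-order term, together with $\partial_{\nu_t}c\le 0$ on $\Gamma_T$. The strong maximum principle combined with the periodicity argument in the proof of Lemma \ref{l:estimates} forces $c$ to be constant in $Q_T$. On the other hand, since $u(\cdot,t_0)$ is non-constant on the compact set $\overline\Omega_{t_0}$, it attains its extrema at some $p_0$ which is either an interior critical point or a boundary point; in the latter case the Neumann condition $\partial u/\partial\nu_{t_0}(p_0)=0$ together with the fact that $p_0$ is also a critical point of $u|_{\partial\Omega_{t_0}}$ forces $\nabla_{t_0}u(p_0)=0$. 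Hence $v(p_0,t_0)=0$, the constancy of $c$ forces $c\equiv 0$ and therefore $v\equiv 0$, contradicting the non-constancy of $u$. Therefore $\lambda_1<0$ and Lemma \ref{l:instability} concludes the proof. I expect the main technical obstacles to be the limited regularity of $v$ (only Lipschitz, because of the square root at zeros of $\nabla_t u$), which requires the $\sqrt{w+\eps}$ approximation in both the subsolution step and the strong maximum principle step, and the simultaneous propagation of the $\sff_t\le 0$ hypothesis through the Neumann boundary identity \eqref{15}.
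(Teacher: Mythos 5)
Your proposal is correct and follows essentially the same route as the paper: the subsolution inequality for $|\nabla_t u|_t$ via the Bochner--Weitzenb\"ock formula, the tensor $h(t)$ from differentiating $g^{ij}(t)$ in time, the bound \eqref{21}, the $\sqrt{w+\eps}$ regularization, and the boundary identity \eqref{15} reproduce the paper's Proposition \ref{pro1}, after which Lemma \ref{l:estimates} gives $\lambda_1\le 0$. The exclusion of $\lambda_1=0$ differs only cosmetically (you take the quotient $v/\phi$ where the paper takes the difference $\alpha\phi_1-w$ normalized to have zero minimum), but both hinge on the same Hopf/strong-maximum-principle/periodicity argument and the same final contradiction at an extremum point of $u$ where $\nabla_t u$ vanishes.
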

%%%%%%%%%%%%%%%%%%%%%%%%%tttttttttttttt
The proof is based on the arguments of Hess \cite{He}. The idea is to show that the linearized problem \eqref{4} has a negative eigenvalue. For this purpose we construct a solution
$w$ which satisfies Lemma \ref{l:estimates}.

 It is immediate to see from the proofs that Lemma \ref{l:estimates} and Lemma \ref{l:instability} still apply when we consider a family of submanifolds $\Omega_t\subseteq M$ as above. The operator $A$ is replaced by  \[-\Delta_t- \langle X(p,t,u,\nabla_tu), \nabla_t\,\cdot\,\rangle_t-m_0,\] where $\Delta_t$ is the Laplace--Beltrami operator relative to the metric $g(t)$ and $u$ is a solution of \eqref{6}.

In the proof of the theorem we shall also need the following result.

\begin{proposition}\label{pro1}
 Assume that for every vector field $X$ on $\Omega_t$
\begin{equation}\label{hhh}
  h(t)(X,X)-\operatorname{Ric}_t(X,X)\leq0\qquad\mbox{ on }\Omega_t
\end{equation}
and that $$\sff_t(V,V)\leq0$$ for every vector field $V$ on $\partial\Omega_t$.

Let $f=f(t, u, \nabla_t u):\erre\times\erre\times TM\longrightarrow\erre$ be a $C^1$, $T$-periodic function. Assume also that $u$ is a $C^3$, $T$-periodic solution of problem \eqref{6}.
Then the function $w=|\nabla_t u|_t$ satisfies in the weak sense $\mathcal{L} w\leq0$ on $\bigcup_{t\in\mathbb{R}}\Omega_t\times\{t\}$ and $\frac{\partial w}{\partial\nu_t}\leq0$ on $\bigcup_{t\in\erre}\partial\Omega_t\times\{t\}$.
\end{proposition}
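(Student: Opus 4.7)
The plan is to combine the Bochner--Weitzenb\"ock identity with the evolution of the metric to derive a pointwise inequality for $w = |\nabla_t u|_t$ on the open set $\{w>0\}$, then extend it to all of $Q_T$ weakly. First, I would compute $\frac{1}{2}\partial_t w^2$: differentiating $g^{ij}(t)\partial_i u\,\partial_j u$ in time and using the identity $\partial_t g^{rs} = -g^{ri}g^{sk}\partial_t g_{ik}$ one recognizes from \eqref{14} that
\[
\tfrac{1}{2}\partial_t w^2 \;=\; h(t)(\nabla_t u,\nabla_t u) \;+\; \langle \nabla_t u,\nabla_t(\partial_t u)\rangle_t.
\]
Applying the Bochner--Weitzenb\"ock formula \eqref{4} to $u$ at time $t$ and subtracting yields
\[
\tfrac{1}{2}(\partial_t-\Delta_t)w^2 \;=\; [h(t)-\operatorname{Ric}_t](\nabla_t u,\nabla_t u) - |\operatorname{Hess}_t u|^2 + \langle\nabla_t u, \nabla_t(\partial_t u-\Delta_t u)\rangle_t.
\]
Substituting the PDE $\partial_t u-\Delta_t u = f(p,t,u,\nabla_t u)$ and expanding the spatial gradient of $f$ along the section $p\mapsto(u(p,t),\nabla_t u(p,t))$, one obtains $\langle\nabla_t u,\nabla_t f\rangle_t = m_0\,|\nabla_t u|_t^2 + \operatorname{Hess}_t u(\nabla_t u,X)$, where $X$ is the vector field dual to $d_{TM}f$ appearing in \eqref{3}.

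Next I would rewrite both sides using $\tfrac12\Delta_t w^2 = w\Delta_t w+|\nabla_t w|_t^2$ and $\tfrac12\partial_t w^2 = w\,\partial_t w$, and invoke formula \eqref{8} to rewrite $\operatorname{Hess}_t u(\nabla_t u,X) = \tfrac12\langle\nabla_t w^2,X\rangle_t = w\langle\nabla_t w,X\rangle_t$ on $\{w>0\}$. The key geometric step is the Kato-type inequality \eqref{21}, which, applied to $u$ at fixed $t$, gives $|\nabla_t w|_t^2\le |\operatorname{Hess}_t u|^2$ wherever $w>0$. Combining with the curvature hypothesis \eqref{hhh} and dividing by $w>0$ produces
\[
(\partial_t-\Delta_t)w \;\le\; m_0\,w + \langle\nabla_t w,X\rangle_t,
\]
that is, $\mathcal{L}w\le 0$ classically on $\{w>0\}$. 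For the boundary term, since $\partial u/\partial\nu_t=0$ on $\partial\Omega_t$, the formula \eqref{15} gives $\tfrac12\partial_{\nu_t}w^2 = \sff_t(\nabla_t u,\nabla_t u)\le0$, and the same division by $w$ yields $\partial_{\nu_t}w\le 0$ where $w>0$.

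The main obstacle is promoting these pointwise inequalities on $\{w>0\}$ to a global weak statement on all of $\bigcup_t\Omega_t\times\{t\}$, since $w=|\nabla_t u|_t$ is only Lipschitz and may vanish on a large set. My plan is the standard regularization: replace $w$ by $w_\varepsilon := \sqrt{|\nabla_t u|_t^2+\varepsilon^2}$, which is smooth, run the identical Bochner computation for $w_\varepsilon^2$, and obtain an inequality of the form $\mathcal{L}w_\varepsilon \le \varepsilon\,R_\varepsilon/w_\varepsilon$ where the error is controlled because $|\operatorname{Hess}_t u|^2 - |\nabla_t w_\varepsilon|_t^2\ge 0$ by \eqref{21}. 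Testing against an arbitrary nonnegative smooth $T$-periodic function supported away from the parabolic corners and passing to the limit $\varepsilon\to0^+$ yields $\mathcal{L}w\le 0$ in the weak sense on $Q_T$, and likewise $\partial_{\nu_t} w\le 0$ on $\Gamma_T$, as required.
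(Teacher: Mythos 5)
Your proposal is correct and follows essentially the same route as the paper's proof: the Bochner--Weitzenb\"ock formula \eqref{4} combined with the $h(t)$ term from differentiating the metric in time, the substitution of the PDE via \eqref{10}, the refined Kato inequality \eqref{21}, the regularization $w_\epsilon=\sqrt{|\nabla_t u|_t^2+\epsilon^2}$ with an $O(\epsilon)$ error coming from the $m_0$ terms, and formula \eqref{15} for the boundary inequality. The only (immaterial) difference is that you first derive the classical inequality on $\{w>0\}$ before regularizing, whereas the paper works with $w_\epsilon$ from the outset.
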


\begin{proof}[Proof of Proposition \ref{pro1}]
The proof is trivial if $u$ depends only on $t\in\erre$, since $w=|\nabla_t u|_t\equiv0$.  From now on we therefore assume that $w\not\equiv0$ on $Q_T$. For any $\epsilon>0$ we consider the function $w_\epsilon:Q_T\longrightarrow\erre$ defined by
\[
w_\epsilon(x,t)=\sqrt{|\nabla_t u(x,t)|_t^2+\epsilon^2}\qquad\mbox{ for }(x,t)\in Q_T.
\]
Then $w\in C^2$ and it is $T$-periodic in $t\in\erre$. Using \eqref{13}, \eqref{8}, \eqref{4} and \eqref{14} we have
\begin{equation}\label{7}
\begin{aligned}
\langle\nabla_tw_\epsilon,\cdot\rangle_t&=\frac{1}{2w_\epsilon}\langle\nabla_t|\nabla_tu|^2_t,\cdot\rangle_t= \frac{1}{w_\epsilon}\operatorname{Hess}_t(u)(\nabla_tu,\cdot)  \\
\Delta_tw_\epsilon&= \frac{1}{w_\epsilon}\left[|\operatorname{Hess}_t(u)|_t^2 + \operatorname{Ric}_t \pa{\nabla_t u, \nabla_t u} + \langle\nabla_t \Delta_t u, \nabla_t u\rangle_t\right]-\frac{1}{4w_\epsilon^3}\big|\nabla_t|\nabla_tu|^2_t\big|^2_t, \\
\frac{\partial}{\partial t}w_\epsilon&=\frac{1}{w_\epsilon}\left[\langle\nabla_t \frac{\partial u}{\partial t}, \nabla_t u\rangle_t+h(t)(\nabla_tu,\nabla_tu)\right],
\end{aligned}
\end{equation}

By \eqref{3} and \eqref{7}, on $Q_T$ we have
\begin{equation}\label{12}
\begin{aligned}
  \mathcal{L}w_\epsilon \,=& \,\,\frac{1}{w_\epsilon}\bigg[\langle\nabla_t \frac{\partial u}{\partial t}, \nabla_t u\rangle_t+h(t)(\nabla_tu,\nabla_tu)-|\operatorname{Hess}_t(u)|_t^2 -\operatorname{Ric}_t \pa{\nabla_t u, \nabla_t u} -\langle\nabla_t \Delta_t u, \nabla_t u\rangle_t\\
  &\,\,-\operatorname{Hess}_t(u)(\nabla_tu,X(t,u,\nabla_tu))\bigg]+\frac{1}{4w_\epsilon^3}\big|\nabla_t|\nabla_tu|_t^2\big|_t^2-m_0w_\epsilon.
\end{aligned}
\end{equation}
Since $u$ is a solution of the differential equation in \eqref{6} we obtain
\begin{equation}\label{10}
\begin{aligned}
\langle\nabla_t \Delta_t u, \nabla_t u\rangle_t&=\langle\nabla_t\frac{\partial u}{\partial t},\nabla_tu\rangle_t-\langle\nabla_t f(t,p,u,\nabla_t u),\nabla_tu\rangle_t\\
&=\langle\nabla_t\frac{\partial u}{\partial t},\nabla_tu\rangle_t-\frac{\partial f}{\partial\xi}(t,u,\nabla_tu)|\nabla_tu|_t^2-\operatorname{Hess}_t(u)(X(t,u,\nabla_tu),\nabla_t u).
\end{aligned}
\end{equation}
Inserting \eqref{10} into \eqref{12}, recalling that $\operatorname{Hess}_t(u)$ is symmetric and that $m_0=\frac{\partial f}{\partial\xi}(t,u,\nabla_tu)$, we obtain
\[
  \mathcal{L}w_\epsilon= \frac{1}{w_\epsilon}\big[h(t)(\nabla_tu,\nabla_tu)-|\operatorname{Hess}_t(u)|_t^2 -\operatorname{Ric}_t \pa{\nabla_t u, \nabla_t u}+m_0|\nabla_tu|_t^2\big]+\frac{1}{4w_\epsilon^3}\big|\nabla_t|\nabla_tu|_t^2\big|_t^2-m_0w_\epsilon.
\]
By \eqref{hhh} and \eqref{21} we have
\[
\frac{1}{w_\epsilon}\big(h(t)(\nabla_tu,\nabla_tu)-\operatorname{Ric}_t \pa{\nabla_t u, \nabla_t u}\big)-\frac{1}{4w_\epsilon^3}\big(4w_\epsilon^2|\operatorname{Hess}_t(u)|_t^2-\big|\nabla_t|\nabla_tu|_t^2\big|_t^2\big)\leq0.
\]
Hence we deduce that
\begin{equation}\label{22}
  \mathcal{L}w_\epsilon\leq-\frac{m_0}{w_\epsilon}\epsilon^2\leq \epsilon\max_{(p,t)\in M\times\erre}|m_0(p,t)|.
\end{equation}
Passing to the limit as $\epsilon$ tends to $0$ we see that $w_\epsilon$ converges to $w=|\nabla_tu|_t$ uniformly and in $W^{1,p}$ for every $p\in[1,\infty)$. Thus passing to the limit in \eqref{22} as $\epsilon$ tends to $0$, we conclude that $\mathcal{L}w\leq0$ on $Q_T$ in the weak sense.

As for the boundary condition, since $\frac{\partial u}{\partial \nu_t}=0$ on $\partial\Omega\times\erre$, by \eqref{15} we have
\[
\frac{\partial w_\epsilon}{\partial \nu_t}=\frac{1}{2w_\epsilon}\frac{\partial }{\partial \nu_t}|\nabla_tu|_t^2=\frac{1}{w_\epsilon}\sff_t(\nabla_t u,\nabla_t u)\qquad\mbox{ on }\bigcup_{t\in\erre}\partial\Omega_t\times\{t\}.
\]
Thus, since by our assumptions we have that $\sff_t(V,V)\leq0$ for every vector field $V$ on $\partial\Omega_t$ and every $t\in\erre$, we deduce that $\frac{\partial w_\epsilon}{\partial\nu_t}\leq0$ on $\partial\Omega_t$ for every $t\in\erre$. Passing to the limit as $\epsilon$ tends to $0$, since $w_\epsilon$ converges to $w=|\nabla_tu|_t$ uniformly and in $W^{1,p}$ for every $p\in[1,\infty)$, we see that $w$ satisfies
\begin{equation*}
  \begin{cases}
    \mathcal{L}w\leq0 & \mbox{in } \bigcup_{t\in\mathbb{R}}\Omega_t\times\{t\}, \\
    \frac{\partial w}{\partial \nu_t}\leq0, & \mbox{on } \bigcup_{t\in\mathbb{R}}\partial\Omega_t\times\{t\}
  \end{cases}
\end{equation*}
in the weak sense.

\end{proof}

\begin{proof}[Proof of Theorem \ref{thmA}]
  The conclusion follows from Proposition \ref{pro1}. Indeed by our assumptions, the function $w=|\nabla_tu|_t$ is nontrivial, $T$-periodic and it satisfies
  \begin{equation*}
  \begin{cases}
    \mathcal{L}w\leq0 & \mbox{in } Q_T, \\
    \frac{\partial w}{\partial \nu_t}\leq0, & \mbox{on } \Gamma_T.
  \end{cases}
\end{equation*}
Then by Lemma \ref{l:estimates} we have that $\lambda_1$, the smallest eigenvalue of problem
   \begin{equation}\label{23}
     \begin{cases}
       \mathcal{L}\phi=\lambda_1\phi&\textrm{in }Q_T,\\
       \frac{\partial \phi}{\partial\nu_t}=0&\textrm{on }\Gamma_T,\\
       \phi(x,t)=\phi(x,t+T)
     \end{cases}
   \end{equation}
is nonpositive. If $\lambda_1<0$ then the solution $u$ is unstable.

  We now show that the case $\lambda_1=0$ cannot occur. By contradiction, assume that $\lambda_1=0$ and let $\phi_1$ be a positive $T$-periodic eigenfunction on $\overline Q_T$ of problem \eqref{23}. Define
  \[
  v(x,t):=\alpha\phi_1(x,t)-w(x,t),\qquad(x,t)\in\overline Q_T,
  \]
   with $\alpha>0$. Since $\phi_1$, $w$ are continuous and $T$-periodic in $t\in\erre$ and since $\overline Q_T$ is compact, $v$ achieves its minimum. We can choose $\alpha>0$ such that $v\geq0$ on $\overline Q_T$ and
  \begin{equation}\label{9}
  \min_{\overline\Omega\times\erre}v=0.
  \end{equation}

Now note that $v$ satisfies $\mathcal{L}v\geq0$ on $Q_T$ and also  $\frac{\partial v}{\partial\nu_t}\geq0$ on $\Gamma_T$, when $\Gamma_T\neq\emptyset$. By the Hopf lemma and by \eqref{9}, if $\Gamma_T\neq\emptyset$ then $v$ cannot achieve its minimum at any point of $\Gamma_T$. Then $v$ achieves its minimum at some point of $Q_T$. Since $v$ is $T$-periodic in $t\in\erre$, by the strong maximum principle
  $v$ must be constant on $\overline Q_T$. Hence by \eqref{9} we have that $v\equiv0$ on $\overline Q_T$, and thus
  \[
  |\nabla_tu|_t=w=\alpha\phi_1
  \]
  is a positive eigenfunction of problem \eqref{23}.

  Since $u$ is continuous, $T$-periodic in $t\in\erre$ and $\overline Q_T$ is compact, $u$ achieves its minimum on $\overline Q_T$ at some point $(x_0,t_0)$ (it would be equivalent to consider a point $(x_1,t_1)$ where $u$ attains its maximum). If $(x_0,t_0)\in\Gamma_T\neq\emptyset$, then the derivative of $u$ in any direction which is tangent to $\partial\Omega_{t_0}$ computed at $(x_0,t_0)$ must vanish, i.e.
  \[
  \langle\nabla_{t_0}u(x_0,t_0),X\rangle_{t_0}=0
  \]
  for every $X\in T_{x_0}\partial\Omega_{t_0}$. Then $\nabla_{t_0}u(x_0,t_0)$ is a scalar multiple of $\nu_{t_0}$; from the boundary condition in \eqref{6} we conclude that
  \begin{equation}\label{5}
  \nabla_{t_0}u(x_0,t_0)=0.
  \end{equation}
  If $(x_0,t_0)\in Q_T$, since $u$ is $T$-periodic we immediately conclude that \eqref{5} holds, since $(x_0,t_0)$ lies in the interior of the domain. Thus we have that
  \[
  w(x_0,t_0)=|\nabla_{t_0}u(x_0,t_0)|_{t_0}=0,
  \]
  which contradicts the positivity of $w=\alpha\phi_1$ we established above.

  Thus the case $\lambda_1=0$ cannot occur, and hence $\lambda_1<0$ and $u$ is unstable.
\end{proof}

The following corollaries, where $(M,g)$ and $\Omega\subseteq M$ are time-independent are consequences of Theorem \ref{thmA}.
\begin{cor}\label{corA}
  Assume that for every vector field $X$ on $\Omega$
\begin{equation}\label{hhhhhh}
  \operatorname{Ric}(X,X)\geq0\qquad\mbox{ on }\Omega.
\end{equation}
Let $f=f(t, u, \nabla u):\erre\times\erre\times TM\longrightarrow\erre$ be a $C^1$ function which is $T$-periodic for $t\in\erre$. Assume also that
$u$ is a $C^3$, $T$-periodic solution of problem
  \[
  \begin{cases}
    u_t-\Delta u=f(t,u,\nabla u) & \mbox{ on } \Omega\times (0,T)\\
    \frac{\partial u}{\partial \nu}=0 & \mbox{on }\partial \Omega \times (0,T).
  \end{cases}
  \]
Moreover let $\sff(V,V)\leq0$ for every vector field $V$ on $\partial\Omega$.
If for some $t\in (0, T)$ the solution $u$ depends on $x\in M$, it is unstable.
\end{cor}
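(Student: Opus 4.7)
The plan is to reduce Corollary \ref{corA} to a direct application of Theorem \ref{thmA}, treating the time-independent data as a degenerate special case of the time-dependent setup. The only nontrivial point is to verify that hypothesis \eqref{17} of Theorem \ref{thmA} collapses to the Ricci nonnegativity assumption \eqref{hhhhhh} when the ambient metric and the domain do not depend on $t$, after which the conclusion is immediate.

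First I would observe that, since $(M,g)$ and $\Omega$ are time-independent, we may regard the Neumann problem as a particular case of \eqref{6} by setting $\Omega_t \equiv \Omega$ and $g(t) \equiv g$ for every $t \in \erre$, with $\Psi(t,\cdot) = \id_{\Omega}$. Under this identification, the components $g_{ij}(t)$ and $g^{rs}(t)$ are constant in $t$, so $\frac{\partial g^{rs}}{\partial t} \equiv 0$. Plugging this into the definition \eqref{14} of the $2$-tensor $h(t)$ yields $h_{ij}(t) \equiv 0$, and therefore
\[
h(t)(X,X) - \ricc_t(X,X) = -\ricc(X,X)
\]
for every vector field $X$ on $\Omega$. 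Consequently, the assumption \eqref{hhhhhh} that $\ricc(X,X) \geq 0$ is exactly equivalent to the inequality \eqref{17} required by Theorem \ref{thmA}.

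Next I would note that the outward unit normal $\nu_t$ and the second fundamental form $\sff_t$ on $\partial\Omega_t = \partial\Omega$ are also independent of $t$, so the standing hypothesis $\sff(V,V) \leq 0$ on $\partial\Omega$ translates directly into $\sff_t(V,V) \leq 0$ for every $t$, as required by Theorem \ref{thmA}. The nonlinearity $f = f(t,u,\nabla u)$ is $C^1$ and $T$-periodic by assumption, and the $T$-periodic solution $u$ is $C^3$, so all remaining hypotheses are met.

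Having verified all hypotheses, the conclusion of Theorem \ref{thmA} applies: if $u$ is nonconstant in $x \in \Omega$ for some $t \in (0,T)$, then $u$ is unstable. Since there is essentially no obstacle here beyond tracking the vanishing of $h(t)$, the entire argument reduces to this one observation, and no further work is needed.
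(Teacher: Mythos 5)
Your proposal is correct and is exactly the argument the paper intends: the corollary is stated as a direct consequence of Theorem \ref{thmA}, obtained by observing that for a time-independent metric $\frac{\partial g^{rs}}{\partial t}\equiv 0$, hence $h(t)\equiv 0$ and condition \eqref{17} reduces to $\operatorname{Ric}(X,X)\geq 0$, with the second fundamental form hypothesis carrying over unchanged. No discrepancy with the paper's (implicit) proof.
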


\begin{cor}\label{corB}
 Assume that for every vector field $X$ on $\Omega$
\begin{equation*}
  \operatorname{Ric}(X,X)\geq0\qquad\mbox{ on }\Omega.
\end{equation*}
Let $f=f(u, \nabla u):\erre\times TM\longrightarrow\erre$ be a $C^1$ function. Assume also that
$u$ is a $C^3$ solution of the Neumann problem
  \[
  \begin{cases}
    \Delta u+f(u,\nabla u)=0 & \mbox{ on } \Omega\times (0,T)\\
    \frac{\partial u}{\partial \nu}=0 & \mbox{on }\partial \Omega\times (0,T).
  \end{cases}
  \]
Moreover let $\sff(V,V)\leq0$ for every vector field $V$ on $\partial\Omega$. If $u$ is not constant on $M$, then $u$ is unstable.
\end{cor}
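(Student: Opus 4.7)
The plan is to reduce Corollary \ref{corB} to a direct application of Corollary \ref{corA} by regarding the stationary solution $u$ as a time-independent (hence trivially $T$-periodic) solution of the associated parabolic Neumann problem.

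First, I will observe that any $C^3$ solution $u = u(x)$ of the elliptic Neumann problem $\Delta u + f(u, \nabla u) = 0$ on $\Omega$ is automatically a $C^3$, $T$-periodic solution of the parabolic problem considered in Corollary \ref{corA}: indeed $u_t \equiv 0$, so the parabolic equation $u_t - \Delta u = f(u, \nabla u)$ collapses to $-\Delta u = f(u, \nabla u)$, which holds by hypothesis, and the Neumann boundary condition is the same in both settings. The other hypotheses of Corollary \ref{corA} are then immediate to verify: the nonlinearity $f = f(u, \nabla u)$, being independent of $t$, is a special case of a $C^1$ function $f(t, u, \nabla u)$ that is trivially $T$-periodic in $t$; the Ricci condition $\operatorname{Ric}(X, X) \geq 0$ and the second fundamental form condition $\sff(V, V) \leq 0$ are assumed directly; and the hypothesis that $u$ is non-constant in space gives exactly the requirement that $u(\cdot, t) = u(\cdot)$ depends on $x$ for some (in fact every) $t \in (0, T)$. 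Corollary \ref{corA} then delivers the desired instability.

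Since the reduction is purely formal, there is no substantial obstacle; the only conceptual point to pin down is the consistency between the linearized instability of the stationary elliptic problem and the instability notion employed in the periodic-parabolic framework of Lemma \ref{l:instability}. This is transparent because the linearization around a stationary $u$ has $t$-independent coefficients, so its principal periodic-parabolic eigenvalue coincides with the principal eigenvalue of the corresponding elliptic Neumann problem; consequently $\lambda_1 < 0$ forces instability in both senses, and the conclusion follows.
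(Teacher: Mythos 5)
Your proposal is correct and follows the paper's own (implicit) route: the paper derives Corollary \ref{corB} precisely by viewing the stationary solution as a trivially $T$-periodic solution of the time-independent parabolic problem, so that Theorem \ref{thmA} (equivalently Corollary \ref{corA}, since $h(t)\equiv0$ when the metric is time-independent) applies directly. Your closing remark on the coincidence of the periodic-parabolic principal eigenvalue with the elliptic one for $t$-independent coefficients is a sensible clarification consistent with Lemma \ref{l:instability}.
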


In a bounded domain $\Omega\subset \erre^n$ with $C^1$ boundary $\partial\Omega$ the condition \eqref{hhhhhh} is trivially satisfied, since $\operatorname{Ric}\equiv0$ on $\erre^n$. Moreover the condition that $\sff(V,V)\leq0$ for every vector field $V$ on $\partial\Omega$, requires the domain $\Omega$ to be convex. Thus Theorem \ref{thmA} and Corollary \ref{corA} are in accordance with \cite[Theorem 1]{He}
  and with \cite[Corollary 1]{He}, respectively.

  Moreover, Corollary \ref{corB} extends the results in \cite{Ji} and \cite{BPT} to the case where the nonlinearity $f(u,\nabla u)$ depends also on $\nabla u$, and thus where the first eigenvalue of the corresponding linearized problem does not admit a variational characterization.

Condition \eqref{17} is only needed for $X=\nabla u$, as it clear from the proof. In the case of surfaces of revolution, one knows that a solution depending on $\theta$ is automatically unstable, see Lemma \ref{lemma33}, hence condition \eqref{17} is only needed for radial solutions, i.e. in the radial direction. Hence in this case \eqref{17}  reduces to $\operatorname{Ric}\geq0$. By Remark \ref{rem33}, Theorem \ref{thmB} is sharper than Theorem \ref{thmA} for surfaces of revolution.

%%%%%%%%%%%%%%%%%%%%%%%%%%%%%%%%%%%%%%%%%%%%%%%%%
%%%%%%%%%%%%%%%%%%%%%%%%%%%%%%%%%%%%%%%%%%%%%%%%%
\bibliographystyle{plain}

\end{document}